\def\sD{{\mathfrak D}}      
   \def\sH{{\mathfrak H}}   
\def\sM{{\mathfrak M}}   \def\sN{{\mathfrak N}}
      \def\dC{{\mathbb C}}
\def\cA{{\mathcal A}}      
\def\cD{{\mathcal D}}      
\def\cG{{\mathcal G}}   \def\cH{{\mathcal H}}   
      \def\cL{{\mathcal L}}
      \def\cR{{\mathcal R}}
      \def\cU{{\mathcal U}}
\def\cV{{\mathcal V}}
   \def\bB{{\mathbf B}}
\def\ran{{\rm ran\,}}
\def\cran{{\rm \overline{ran}\,}}
\def\dom{{\rm dom\,}}
\def\codim{{\rm codim\,}}
\def\uphar{{\upharpoonright\,}}
\def\f{\varphi}
\def\half{{\frac{1}{2}}}
\newtheorem{theorem}{Theorem}[section]
\newtheorem{lemma}[theorem]{Lemma}
\newtheorem{proposition}[theorem]{Proposition}
\newtheorem{corollary}[theorem]{Corollary}
\newtheorem{remark}[theorem]{Remark}
\numberwithin{equation}{section}
\def\IM{{\rm Im\,}}
\def\wt{\widetilde}
\def\wh{\widehat}
\begin{document}
\title
[$C$-self-adjoint contractive extensions]
{$C$-self-adjoint contractive extensions of $C$-symmetric non-densely defined contractions}
\author[Yury Arlinski\u{\i}]{Yury Arlinski\u{\i}}
\author[Konrad Schmüdgen]{Konrad Schmüdgen}
\address{Volodymyr Dahl East Ukrainian National University, Kyiv, Ukraine} \email{yury.arlinskii@gmail.com}
\address{University of Leipzig, Mathematical Institute, Augustusplatz 10/11, D-04109 Leipzig, Germany}
\email{schmuedgen@math.uni-leipzig.de}
\begin{abstract}
Given  a conjugation (involution) $C$ on a Hilbert space, $C$-self-adjoint contractive extensions of a non-densely defined $C$-symmetric contraction are studied and
 parametrizations of all such extensions are obtained. As an  application, a proof of an announced result of Glazman \cite{Glazman1} on the existence of maximal
dissipative $C$-self-adjoint extensions of a densely defined $C$-symmetric dissipative operator is given.

\end{abstract}

\maketitle
\thispagestyle{empty} \textbf{Key words:} contraction, conjugation, dissipative operator, complex symmetric operator

\maketitle
\textbf{AMS  Subject  Classification (2020)}.
 47A20, 47B44, 47B02.\\

\tableofcontents
\section{Introduction}
A  conjugation (or involution) on a Hilbert space $\cH$ is an anti-linear isometric operator on $\cH$ such that $C^2=I_\cH$, see \cite{Glazman1,GP1, GP2,
Schmudgen, Mashreghi}.
Then it follows that
\begin{equation}\label{jcyjdf}
(Cf,g)=(Cg, f)\;\;\forall f,g\in\cH.
\end{equation}
If $C$ be a conjugation on $\cH$, a linear operator $B$ on $\cH$ is called $C$-symmetric if
\begin{equation}\label{kvit23aa}
(Bx, Cy)=(x, CBy)\;\; \forall x,y\in\dom B.
\end{equation}
If $B$ is densely defined, then \eqref{kvit23aa}
is equivalent to $B\subseteq CB^*C$. If $B= CB^*C$, then the operator $B$ is called $C$-self-adjoint.
These notions were introduced by Glazman \cite{Glazman3}.

The present paper deals with norm preserving $C$-self-adjoint extensions of  non-densely defined bounded $C$-symmetric operators.

The study of norm preserving extensions of linear operators and their interplay with other problems has a long history in operator theory,
 see e.g. \cite{ArlTsek1986,Arl1999,AG1982,DKW,FoFra1984, Kr, Parrot, Phillips1959, Schmudgen, ShYa, SF}. A sample is von Neumann's theory of the Cayley
 transform which connects self-adjoint extensions of a symmetric operator with unitary  extensions of a non-densely defined isometric operator \cite{AG,
 Schmudgen}. The Cayley transform reduces also the problem of maximal dissipative (accretive) extensions of a  dissipative (accretive) operator to
 contractive extensions of a non-densely defined contraction.

The starting point and first motivation for this paper was the following result which was announced by I.M. Glazman in \cite{Glazman1}:
\begin{theorem}\label{th1}
\textit{Each $C$-symmetric ($J$-symmetric in Glazman's terminology) dissipative operator with dense domain admits a $C$-self-adjoint maximal dissipative
extension}.
\end{theorem}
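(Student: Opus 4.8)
The plan is to reduce Theorem~\ref{th1} to the contraction setting of this paper by means of the Cayley transform, exactly as indicated in the introduction. Let $A$ be a densely defined $C$-symmetric dissipative operator, normalized so that $\IM(Af,f)\ge 0$ on $\dom A$; replacing $A$ by its closure (which is again $C$-symmetric, since $CA^*C$ is closed, and again dissipative) I may assume $A$ is closed. From $\|(A\pm \I I)f\|^2=\|Af\|^2+\|f\|^2\pm 2\IM(Af,f)$ one sees that $A+\I I$ is bounded below and has closed range, so that
\[
V:=(A-\I I)(A+\I I)^{-1},\qquad \dom V=\ran(A+\I I),
\]
is a contraction defined on the (possibly proper) closed subspace $\ran(A+\I I)$.

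First I would check that $V$ satisfies the hypotheses of the main theorem, i.e. that it is a non-densely defined $C$-symmetric contraction. Writing $u=(A+\I I)x$ and $w=(A+\I I)y$ with $x,y\in\dom A$, a direct expansion of $(Vu,Cw)-(u,CVw)$, using $C(\I I)C=-\I I$, collapses to $2\I[(Ax,Cy)-(x,CAy)]$, which vanishes by the $C$-symmetry \eqref{kvit23aa} of $A$; hence $V$ is $C$-symmetric in the sense of \eqref{kvit23aa}. Moreover
\[
I-V=\bigl[(A+\I I)-(A-\I I)\bigr](A+\I I)^{-1}=2\I(A+\I I)^{-1},
\]
so $\Ker(I-V)=\{0\}$ and, crucially, $\ran(I-V)=\dom A$ is dense.

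Next I would invoke the main theorem to obtain a $C$-self-adjoint contractive extension $\wt V$ of $V$ defined on all of $\cH$, so that $\wt V$ is an everywhere defined contraction with $\wt V=C\wt V^*C$, and set $\wt A:=\I(I+\wt V)(I-\wt V)^{-1}$. The step I expect to be the main obstacle is ensuring that $\wt A$ is a genuine \emph{operator} rather than a linear relation, i.e. that $\Ker(I-\wt V)=\{0\}$; this is precisely where the density of $\dom A$ must interact with $C$-self-adjointness. I would argue as follows: since $\wt V\supseteq V$ we have $\ran(I-\wt V)\supseteq\ran(I-V)=\dom A$, which is dense, whence $\Ker(I-\wt V^*)=(\overline{\ran(I-\wt V)})^{\perp}=\{0\}$. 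The relation $\wt V^*=C\wt V C$ gives $I-\wt V^*=C(I-\wt V)C$, so $\Ker(I-\wt V^*)=C\,\Ker(I-\wt V)$; as $C$ is a bijection this forces $\Ker(I-\wt V)=\{0\}$. Hence $\wt A$ is a densely defined operator with $\dom\wt A=\ran(I-\wt V)$.

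Finally I would verify the three required properties of $\wt A$. It is maximal dissipative: for $f=(I-\wt V)g$ one computes $\IM(\wt A f,f)=\|g\|^2-\|\wt V g\|^2\ge 0$, while $\wt A+\I I=2\I(I-\wt V)^{-1}$ has range $\cH$. It extends $A$ because the Cayley transform is inclusion-preserving and $\wt V\supseteq V$. Lastly it is $C$-self-adjoint: the same intertwining computation as in the second paragraph, now applied with $C(\wt A^*\pm\I I)C=C\wt A^*C\mp\I I$, shows that the Cayley transform of $C\wt A^*C$ equals $C\wt V^*C=\wt V$ (the two forms $(\wt A^*+\I I)(\wt A^*-\I I)^{-1}$ and $(\wt A^*-\I I)^{-1}(\wt A^*+\I I)$ of $\wt V^*$ agreeing because the factors commute). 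Since both $\wt A$ and $C\wt A^*C$ are recovered from $\wt V$ by the inverse Cayley transform, they coincide, so $C\wt A^*C=\wt A$. Thus $\wt A$ is the desired $C$-self-adjoint maximal dissipative extension of $A$.
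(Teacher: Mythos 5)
Your proof is correct and takes essentially the same route as the paper: pass to the Cayley transform $V=(A-\I I)(A+\I I)^{-1}$, invoke the paper's main result on the existence of $C$-self-adjoint contractive extensions of a non-densely defined $C$-symmetric contraction to get $\wt V$, and return via the inverse Cayley transform. You merely spell out details the paper leaves implicit (the $C$-symmetry of $V$, the injectivity of $I-\wt V$, and the $C$-self-adjointness of the inverse transform), all of which check out.
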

According to   our knowledge, a proof of this theorem never appeared. In \cite{Glazman1} it is argued as follows:  Suppose that $T$ is a closed densely
defined dissipative operator and let $V=(T-iI)(T+iI)^{-1}$ be its Cayley transform.
Then $V$ is a contraction defined on the subspace $\cH_V:=\ran (T+i I)$. If $T$ is  $C$-symmetric, then $V$ is $C$-symmetric as well.
Further, it is noted in \cite{Glazman1} that a modification of Krein's approach in \cite{Kr} gives the existence of contractive $C$-self-adjoint extension
$\wt V$ of $V$. Having $\wt V$ the inverse Cayley transform $\wt T=i(I+\wt V)(I-\wt V)^{-1}$, $\dom \wt T=\ran(I-\wt V),$
yields a $C$-self-adjoint maximal dissipative extension of $T$.

A proof of the existence of such an extension $\wt V$ was missing.
In Theorem \ref{abe17a} of this paper we prove the existence of $C$-self-adjoint contractive extensions of  non-densely defined $C$-symmetric contractions.
In particular, combined with the reasoning of the preceding paragraph, this completes the proof of   Glazman's Theorem \ref{th1}.

Let us sketch the main results of this paper. In Theorem \ref{kvit20ab}  we describe all  $C$-self-adjoint bounded extensions of a non-densely defined
$C$-symmetric bounded operator.
A description of these extensions has been already established in \cite[Theorem 1]{Raikh1975}. Our Theorem \ref{kvit20ab} gives a new form of such
extensions.

Theorems \ref{abe17a} and  \ref{kbg08ab} contain  complete descriptions of all contractive $C$-self-adjoint extensions of a non-densely defined $C$-symmetric contraction and $C$-self-adjoint unitary extensions of a $C$-symmetric non-densely defined isometry.  For
this we use Crandall's parameterization \cite{Crandal} of  contractive extensions of  non-densely defined contractions.

A particular emphasis in this paper is on the uniqueness of the corresponding extensions. In Theorems  \ref{kkvit23ab} and  \ref{kbg08ab}, we characterize the uniqueness of
$C$-self-adjoint contractive extensions and $C$-self-adjoint unitary extensions, respectively.  Theorems \ref{ber22aa} and \ref{kbh10aa} deal with  the uniqueness of $C$-self-adjoint maximal dissipative extensions and self-adjoint extensions.

Let us fix  some notation. The Banach space of all bounded operators acting between Hilbert spaces $\cH_1$ and $\cH_2$ is denoted by $\bB(\cH_1,\cH_2)$ and
$\bB(\cH):=\bB(\cH,\cH)$.
The symbols $\dom T$, $\ran T$, $\ker T$ denote
the domain, range and kernel of a linear operator $T$, respectively,
and $\cran T$ denotes the closure of the range of $T$. If $Z\in \bB(\cH_1,\cH_2)$ is a contraction,  we  abbreviate
\begin{equation*}\cD_Z:=(I-Z^*Z)^\half \quad \textrm{and}\quad \sD_Z:=\cran \cD_Z.
\end{equation*}

\section{Bounded $C$-self-adjoint extensions of a non-densely defined $C$-symmetric bounded operator}
Let us begin with two simple facts which are of interest in itself.
\begin{lemma}\label{lip09ad}
Let $C$ be a conjugation in the Hilbert space $\cH$. Then the map $J\to U:=CJ$ is a bijection of the set of conjugations $J$ on the set of $C$-self-adjoint unitary operators $U$.
\end{lemma}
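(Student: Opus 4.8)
The plan is to produce an explicit two-sided inverse for the map $\Phi\colon J\mapsto CJ$, namely the map $\Psi\colon U\mapsto CU$, and to verify that $\Phi$ sends conjugations to $C$-self-adjoint unitaries, that $\Psi$ sends $C$-self-adjoint unitaries to conjugations, and that $\Psi\circ\Phi$ and $\Phi\circ\Psi$ are both the identity. The two algebraic facts I will lean on throughout are $C^2=I_\cH$ and the reformulation of the $C$-self-adjointness condition: for a \emph{linear} operator $U$, the equality $U=CU^*C$ is equivalent, after multiplying on the left by $C$, to $CU=U^*C$. Because $U$ will always be linear, only the ordinary (linear) adjoint $U^*$ enters, so no adjoint of an anti-linear operator is needed.

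First I would check that $U:=CJ$ is a $C$-self-adjoint unitary whenever $J$ is a conjugation. Since $C$ and $J$ are both anti-linear, their composition $U$ is linear; since both are isometric, $\|Uf\|=\|CJf\|=\|Jf\|=\|f\|$, so $U$ is isometric. Using $C^2=J^2=I_\cH$ one sees that $JC$ is a two-sided inverse of $U$ (indeed $CJ\cdot JC=CJ^2C=C^2=I_\cH$ and $JC\cdot CJ=JC^2J=J^2=I_\cH$), so $U$ is a linear bijective isometry, i.e. unitary, with $U^*=U^{-1}=JC$. Finally $CU^*C=C(JC)C=CJ\,C^2=CJ=U$, which is exactly the $C$-self-adjointness $U=CU^*C$.

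Next I would check that $J:=CU$ is a conjugation whenever $U$ is a $C$-self-adjoint unitary. As the composition of the anti-linear $C$ with the linear $U$, the map $J$ is anti-linear, and $\|Jf\|=\|CUf\|=\|Uf\|=\|f\|$ shows it is isometric. For the involution property I invoke the reformulation $CU=U^*C$ of $C$-self-adjointness together with $U^*U=I_\cH$: then $J^2=CU\,CU=(U^*C)(CU)=U^*C^2U=U^*U=I_\cH$. Hence $J$ is an anti-linear isometry with $J^2=I_\cH$, i.e. a conjugation.

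It remains to note that the two maps are mutually inverse, which is immediate from $C^2=I_\cH$: $\Psi(\Phi(J))=C(CJ)=C^2J=J$ and $\Phi(\Psi(U))=C(CU)=C^2U=U$. Together with the first two steps this shows $\Phi$ is a well-defined bijection from conjugations onto $C$-self-adjoint unitaries with inverse $\Psi$. There is no serious obstacle here; the only point requiring care is the passage between the two forms $U=CU^*C$ and $CU=U^*C$ of the $C$-self-adjointness condition, which is what makes the involution identity $J^2=I_\cH$ fall out cleanly.
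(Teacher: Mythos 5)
Your proof is correct and follows essentially the same route as the paper: the forward direction computes $U^*=JC$ and verifies $C$-self-adjointness, and the converse uses $CU=U^*C$ to get $J^2=CUCU=U^*U=I$, exactly as in the paper's argument. The only difference is that you spell out the mutual-inverse check $\Psi\circ\Phi=\Phi\circ\Psi=\mathrm{id}$ explicitly, which the paper leaves implicit.
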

\begin{proof}
If $J$ is a conjugation, then $U=CJ$ is obviously a unitary and $C=UJ$, so  $CUC=CCJ C=JC=U^*$, that is, $U$ is $C$-self-adjoint.

Conversely, let $U$ be a $C$-self-adjoint unitary. Set $J:=CU$. Then $J$ is anti-linear, isometric, $U=CJ$ and $J^2=CUCU=U^*U=I$, that is, $J$ is a conjugation.
\end{proof}
\begin{lemma}\label{kvit20a}
Let $C$ be a conjugation in  $\cH$ and let $A\in \bB(\cH).$ Then the operator
\begin{equation}\label{kvit20aa}
\cA:=\half(A+CA^*C)
\end{equation}
is $C$-self-adjoint and, moreover, $||\cA||\le ||A||.$ If $A$ is $C$-self-adjoint, then  $\cA=A=CA^*C$.
\end{lemma}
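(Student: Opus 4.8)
The plan is to verify the three assertions in turn, the crux being a correct bookkeeping of adjoints in the presence of the anti-linear conjugation $C$. Since $B\mapsto B^*$ is conjugate-linear while $C$ is anti-linear, the composite map $B\mapsto CB^*C$ is \emph{linear} on $\bB(\cH)$, so $\cA$ is a well-defined bounded operator; the content is to show that it is $C$-self-adjoint, i.e.\ that $\cA=C\cA^*C$.

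First I would establish the key identity $(CA^*C)^*=CAC$. This is the main obstacle, as it is the only place where the anti-linearity of $C$ genuinely enters, and one must be careful not to drop complex conjugates. I would compute $(CA^*Cx,y)$ for arbitrary $x,y\in\cH$ by repeatedly applying \eqref{jcyjdf} together with the defining relation of the adjoint and the isometry identity $(Cf,Cg)=(g,f)$, which itself follows from \eqref{jcyjdf} and $C^2=I_\cH$. Concretely, $(CA^*Cx,y)=(Cy,A^*Cx)=(ACy,Cx)$, and then rewriting $ACy=C(CACy)$ and applying $(Cf,Cg)=(g,f)$ yields $(CA^*Cx,y)=(x,CACy)$, which is exactly the claim $(CA^*C)^*=CAC$.

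Granting this identity, the $C$-self-adjointness is immediate: taking adjoints in \eqref{kvit20aa} gives $\cA^*=\half(A^*+CAC)$, and hence
\begin{equation*}
C\cA^*C=\half\bigl(CA^*C+C(CAC)C\bigr)=\half\bigl(CA^*C+A\bigr)=\cA,
\end{equation*}
using $C^2=I_\cH$. For the norm bound I would use that $C$ is an anti-linear isometry, so $\|CA^*Cx\|=\|A^*Cx\|\le\|A^*\|\,\|x\|=\|A\|\,\|x\|$, giving $\|CA^*C\|\le\|A\|$; the triangle inequality applied to \eqref{kvit20aa} then yields $\|\cA\|\le\half(\|A\|+\|CA^*C\|)\le\|A\|$. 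Finally, if $A$ is $C$-self-adjoint then by definition $A=CA^*C$, so $\cA=\half(A+CA^*C)=A$, and the equalities $\cA=A=CA^*C$ follow at once.
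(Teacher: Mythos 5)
Your proof is correct and follows essentially the same route as the paper: compute $\cA^*=\half(A^*+CAC)$, conjugate by $C$ to get $C\cA^*C=\cA$, and bound the norm via $\|CA^*C\|\le\|A^*\|=\|A\|$. The only difference is that you explicitly verify the identity $(CA^*C)^*=CAC$, which the paper uses without comment; your verification of it is accurate.
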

\begin{proof}
Since $\cA^*=\half(A^*+CAC)$ we get $C\cA^*C=\half(CA^*C+A)=
\cA$, i.e., the operator $\cA$ is $C$-self-adjoint. Since $||CA^*C||\le ||A^*||=||A||$, we obtain $||\cA||\le ||A||$.
\end{proof}

Note that the $C$-self-adjointness of the operator $\cA$ defined in \eqref{kvit20aa} has been used in the proof of Proposition 2.27 in \cite{BKMMW19}.

 Let $V$ be a bounded linear operator in $\cH$ defined on a subspace $\cH_V$. Suppose $V$ is a $C$-symmetric w.r.t.  $C$, i.e.,
\begin{equation}\label{abe17aa}
(Vf, Cg)=(f, CVg),\;\forall f,g\in \cH_V.
\end{equation}
We are interested in bounded $C$-self-adjoint extensions $\wt\cV$ on $\cH$, i.e., bounded linear operators $\wt\cV$ that
\[
\dom \wt\cV=\cH, \; \wt\cV\uphar\cH_V=V,\; \wt\cV=C\wt\cV^*C.
\]
Clearly, the following are equivalent:
\begin{enumerate}
\def\labelenumi{\rm (\roman{enumi})}
\item $\wt\cV$ is a bounded and $C$-self-adjoint extension of $V$;
\item $C\wt\cV^*C$ is a bounded $C$-self-adjoint extension of $V$.
\end{enumerate}
If $\cU$ is the  bounded operator  defined by
\[
\cH_{\cU}:=\dom\cU=C\dom \cV=C\cH_{\cV},\; \cU h=C\cV Ch,\; h\in\cH_{\cU},
\]
then the equality \eqref{kvit23aa} implies that the operator $\cU$ is $C$-symmetric and the pair $\{ \cV,\cU\}$ forms the \textit{adjoint (dual) pair} of operators, i.e. we have
\[
(\cV f, h)=(f,\cU h)\; \;\forall f\in\cH_{\cV},\;\forall h\in\cH_{\cU}.
\]
The adjoint $\wt\cV^*$ of any $C$-self-adjoint bounded extension $\wt\cV$ of a $C$-symmetric operator $\cV$ is also a $C$-self-adjoint extension of $\cU$.

Let $V^*\in\bB(\cH,\cH_V)$ be the adjoint of $V\in\bB(\cH_V,\cH)$. Then \eqref{abe17aa} is equivalent to the equality
\begin{equation}\label{abe19a}
P_{\cH_V}CVh=V^*Ch\;\;\forall h\in\cH_V.
\end{equation}
Set $\cH_V^\perp:=\cH\ominus\cH_V$. Note that the formula
\begin{equation}\label{hvf02a}
\wt\cV:=VP_{\cH_V}+\wt LP_{\cH_V^\perp}
\end{equation}
gives is a one-to-one correspondence between all operators $\wt L\in\bB(\cH_V^\perp,\cH)$ and all bounded extensions $\wt\cV$  of $V$.
The following result has been established by Raikh in \cite[Theorem 1]{Raikh1975}.
\begin{theorem}\label{abe27a}
The formula
\[
\wt\cV=VP_{\cH_V}+(CV^*C+S)P_{\cH_V^\perp}
\]
 gives a
bijective correspondence between the set of all $C$-self-adjoint extensions of a bounded non-densely defined $C$-symmetric operator
$V$ and the set of all $C$-self-adjoint operators $S\in\bB(\cH_V^\perp, CH_V^\perp)$.
\end{theorem}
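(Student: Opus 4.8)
The plan is to single out, among all bounded extensions of $V$ furnished by the parameterization \eqref{hvf02a}, exactly those that are $C$-self-adjoint. Since the operator $\wt\cV$ in \eqref{hvf02a} is everywhere defined and bounded, $C$-self-adjointness is equivalent to the $C$-symmetry identity $(\wt\cV f,Cg)=(f,C\wt\cV g)$ for all $f,g\in\cH$. Writing $f=f_1+f_2$ and $g=g_1+g_2$ with $f_1,g_1\in\cH_V$ and $f_2,g_2\in\cH_V^\perp$ and expanding both sides bilinearly, the term involving only $f_1,g_1$ cancels by the $C$-symmetry \eqref{abe17aa} of $V$. What remains splits, upon letting only one pair of components be nonzero, into three testable conditions: a $(\cH_V,\cH_V^\perp)$-condition (A), a $(\cH_V^\perp,\cH_V)$-condition (B), and a $(\cH_V^\perp,\cH_V^\perp)$-condition (C). The full identity is equivalent to the conjunction of (A), (B) and (C).

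First I would record the elementary identities for a conjugation that will be used repeatedly, namely $(a,Cb)=(b,Ca)$ and $(Cu,Cw)=(w,u)$, both immediate from \eqref{jcyjdf}, together with $CP_{\cH_V}C=P_{C\cH_V}$, which holds because $C$ carries the orthogonal decomposition $\cH=\cH_V\oplus\cH_V^\perp$ onto $\cH=C\cH_V\oplus C\cH_V^\perp$. Applying $(a,Cb)=(b,Ca)$ to condition (B) turns it verbatim into condition (A), so only (A) and (C) remain. Next I would rewrite (A): since $(Vf_1,Cg_2)=(f_1,V^*Cg_2)$ and $V^*Cg_2\in\cH_V$, condition (A) reads $V^*Cg_2=P_{\cH_V}C\wt Lg_2$ for all $g_2\in\cH_V^\perp$, and applying $C$ together with $CP_{\cH_V}C=P_{C\cH_V}$ converts this into $P_{C\cH_V}\wt Lg_2=CV^*Cg_2$. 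Thus (A) pins down exactly the $C\cH_V$-component of $\wt L$, forcing $\wt L=CV^*C\uphar\cH_V^\perp+S$ with free part $S:=P_{C\cH_V^\perp}\wt L\uphar\cH_V^\perp\in\bB(\cH_V^\perp,C\cH_V^\perp)$.

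It then remains to substitute this form of $\wt L$ into condition (C). The cross terms vanish: the relation $(Cu,Cw)=(w,u)$ gives $(CV^*Cf_2,Cg_2)=(g_2,V^*Cf_2)=0$, and directly $(f_2,V^*Cg_2)=0$, since $V^*$ maps into $\cH_V$ while $f_2,g_2\in\cH_V^\perp$. Hence (C) collapses to $(Sf_2,Cg_2)=(f_2,CSg_2)$, which by $(a,Cb)=(b,Ca)$ is the same as $(Sf_2,Cg_2)=(Sg_2,Cf_2)$, i.e. precisely the statement that $S$ is $C$-self-adjoint. Putting the pieces together, $\wt\cV$ from \eqref{hvf02a} is a $C$-self-adjoint extension of $V$ exactly when $\wt L=CV^*C\uphar\cH_V^\perp+S$ with $S$ a $C$-self-adjoint element of $\bB(\cH_V^\perp,C\cH_V^\perp)$, which is the asserted formula. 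Bijectivity is then inherited from \eqref{hvf02a}: the map $\wt L\mapsto\wt\cV$ is already a bijection onto all bounded extensions, and because $CV^*C$ lands in $C\cH_V$ while $S$ lands in $C\cH_V^\perp$, the parameter is recovered as $S=P_{C\cH_V^\perp}\wt L\uphar\cH_V^\perp$, so $S\mapsto\wt\cV$ is a bijection onto the $C$-self-adjoint extensions.

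The main obstacle I anticipate is bookkeeping rather than any genuine difficulty: one must keep the two orthogonal decompositions $\cH_V\oplus\cH_V^\perp$ and $C\cH_V\oplus C\cH_V^\perp$ cleanly separated and apply the antilinear identities for $C$ in the correct slots. The two decisive points are $CP_{\cH_V}C=P_{C\cH_V}$, which is what lets condition (A) fix only the $C\cH_V$-part of $\wt L$, and the orthogonality $\ran V^*\subseteq\cH_V\perp\cH_V^\perp$, which is what makes the cross terms in (C) disappear.
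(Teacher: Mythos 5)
Your argument is correct and complete. Note, however, that the paper gives no proof of Theorem \ref{abe27a} at all: it is quoted from Raikh \cite[Theorem 1]{Raikh1975}, so there is no in-paper proof to compare against. Your derivation is essentially the same block-decomposition technique the authors do use for the neighbouring results (Proposition \ref{vfh1aa} and Theorem \ref{kvit20ab}): split the identity $C\wt\cV=\wt\cV^*C$ according to $\cH=\cH_V\oplus\cH_V^\perp$, observe that the $(\cH_V,\cH_V)$ block is the hypothesis \eqref{abe17aa} and that the two mixed blocks coincide via $(a,Cb)=(b,Ca)$, and then read off the surviving conditions. The difference is one of packaging: the paper's Proposition \ref{vfh1aa} records the outcome as the two conditions \eqref{be403bb}--\eqref{hvf03bc} on $\wt L$, whereas you push one step further and use $CP_{\cH_V}C=P_{C\cH_V}$ to resolve $\wt L$ into its $C\cH_V$-component (forced to be $CV^*C\uphar\cH_V^\perp$) and its free $C\cH_V^\perp$-component $S$, for which the remaining block is exactly $C$-self-adjointness. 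That extra step is carried out correctly (in particular the vanishing of the cross terms in your condition (C) because $\ran V^*\subseteq\cH_V$), and it is precisely what converts the paper's implicit description into Raikh's explicit formula, so your proposal supplies a clean self-contained proof of the cited theorem.
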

Next we will develop a new description of $C$-self-adjoint bounded extensions.
For this purpose we define the following linear operator
\begin{equation}\label{vfh1a}
Z_0:Ch\mapsto P_{\cH_V^\perp}CVh,\; h\in\cH_V.
\end{equation}
Clearly, $Z_0\in\bB(C\cH_V,\cH_V^\perp)$, $||Z_0||\le ||V||,$ and for $Z_0^*\in\bB(\cH_V^\perp, C\cH_V)$ we have
\begin{equation}\label{hvf02aa}
P_{\cH_V}CZ^*_0g=V^*Cg,\; \; g\in\cH_V^\perp.
\end{equation}
If $\sM$ is a subspace then, see \cite{Raikh1975}, the equality
\begin{equation}\label{hvf19a}
P_{C\sM}=CP_\sM C
\end{equation}
holds. Hence from \eqref{vfh1a} and \eqref{hvf02aa} it follows that
\begin{equation}\label{hvf03bbc}
CV^*Cg=Z_0^*g\;\;\forall g\in\cH_V^\perp.
\end{equation}

\begin{proposition}\label{vfh1aa}
Let $V$ be a $C$-symmetric bounded operator defined on a subspace $\dom V=\cH_V$ and let $Z_0$ be the operator defined by \eqref{vfh1a}.
Then a bounded extension $\wt\cV$ of  $V$ is $C$-self-adjoint if and only if the operator $\wt L\in\bB(\cH_V^\perp,\cH)$ in
\eqref{hvf02a} satisfies the following conditions:
\begin{equation}\label{be403bb}
\wt L^*\uphar C\cH_V=Z_0,
\end{equation}
\begin{equation}\label{hvf03bc}
P_{\cH_V^\perp}C\wt Lf=\wt L^*Cf\;\;\forall f\in \cH_V^\perp.
\end{equation}
\end{proposition}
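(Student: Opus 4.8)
The plan is to recast $C$-self-adjointness as the vanishing of a bilinear form. Since $\wt\cV$ is bounded and everywhere defined, $\wt\cV=C\wt\cV^*C$ is equivalent to $(\wt\cV x,Cy)=(x,C\wt\cV y)$ for all $x,y\in\cH$, and rewriting the right-hand side by \eqref{jcyjdf} (namely $(x,C\wt\cV y)=\overline{(Cx,\wt\cV y)}=(\wt\cV y,Cx)$) this becomes
\[
Q(x,y):=(\wt\cV x,Cy)-(\wt\cV y,Cx)=0,\qquad x,y\in\cH .
\]
The key structural observation is that $Q$ is \emph{linear} in each argument, because the anti-linearity of $C$ is exactly compensated by the conjugate-linearity of the second slot of the inner product, and that $Q$ is anti-symmetric, $Q(x,y)=-Q(y,x)$. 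Thus $\wt\cV$ is $C$-self-adjoint precisely when $Q\equiv 0$.

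First I would split each vector along $\cH=\cH_V\oplus\cH_V^\perp$, writing $x=f_1+g_1$, $y=f_2+g_2$ with $f_i\in\cH_V$, $g_i\in\cH_V^\perp$, and substitute the block form \eqref{hvf02a}, so that $\wt\cV x=Vf_1+\wt L g_1$ and $\wt\cV y=Vf_2+\wt L g_2$. By bilinearity, $Q\equiv 0$ is equivalent to the simultaneous vanishing of $Q$ on the three product subspaces $\cH_V\times\cH_V$, $\cH_V\times\cH_V^\perp$ and $\cH_V^\perp\times\cH_V^\perp$; the fourth block $\cH_V^\perp\times\cH_V$ is redundant by anti-symmetry. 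On $\cH_V\times\cH_V$ one has $Q(f_1,f_2)=(Vf_1,Cf_2)-(Vf_2,Cf_1)$, which vanishes identically because $V$ is $C$-symmetric, i.e.\ by \eqref{abe17aa}; hence this block imposes no restriction on $\wt L$, and it remains to match the other two blocks with \eqref{be403bb} and \eqref{hvf03bc}.

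For the mixed block the vanishing condition reads $(\wt L g,Ch)=(Vh,Cg)$ for all $h\in\cH_V$, $g\in\cH_V^\perp$; taking complex conjugates and shuttling $C$ across the inner product via \eqref{jcyjdf} rewrites it as $(Ch,\wt L g)=(CVh,g)$, that is $(\wt L^*Ch,g)=(P_{\cH_V^\perp}CVh,g)$ for all such $g$. Since $Z_0Ch=P_{\cH_V^\perp}CVh$ by the definition \eqref{vfh1a} and $\{Ch:h\in\cH_V\}=C\cH_V$, this is exactly $\wt L^*\uphar C\cH_V=Z_0$, i.e.\ \eqref{be403bb}. For the remaining block the condition is $(\wt L g_1,Cg_2)=(\wt L g_2,Cg_1)$ for all $g_1,g_2\in\cH_V^\perp$; here I would move $C$ in the first term by \eqref{jcyjdf} to get $(\wt L g_1,Cg_2)=(g_2,C\wt L g_1)=(g_2,P_{\cH_V^\perp}C\wt L g_1)$ (the projection is inserted freely because $g_2\in\cH_V^\perp$), while the second term is $(\wt L g_2,Cg_1)=(g_2,\wt L^*Cg_1)$, so equating for all $g_2\in\cH_V^\perp$ yields $P_{\cH_V^\perp}C\wt L g_1=\wt L^*Cg_1$, which is \eqref{hvf03bc}. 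The computation contains no genuine obstacle; the one point demanding care is the consistent bookkeeping of the anti-linearity of $C$ when transposing it inside the inner products, and in particular verifying once and for all that $Q$ is truly bilinear and anti-symmetric, since that is what legitimizes the reduction to the three blocks.
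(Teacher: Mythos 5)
Your proof is correct and follows essentially the same route as the paper's: both reduce the $C$-self-adjointness of $\wt\cV$ to a block decomposition along $\cH=\cH_V\oplus\cH_V^\perp$, use the $C$-symmetry of $V$ (via \eqref{abe17aa}/\eqref{abe19a}) to dispose of the $\cH_V\times\cH_V$ block, and read off \eqref{be403bb} from the mixed block and \eqref{hvf03bc} from the $\cH_V^\perp\times\cH_V^\perp$ block. The only difference is presentational: you work with the bilinear form $Q(x,y)=(\wt\cV x,Cy)-(\wt\cV y,Cx)$ instead of the operator identity $C\wt\cV=\wt\cV^*C$, which has the minor advantage of making every step a manifest equivalence (so both directions of the ``if and only if'' come for free) and of explaining via antisymmetry why the fourth block is redundant.
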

\begin{proof}
Assume that $\wt\cV_{\wt L}=VP_{\cH_V}+\wt L P_{\cH_V^\perp}$ is a bounded $C$-self-adjoint extension of $V$, i.e.,
\[
C\wt\cV_{\wt L}=\wt\cV^*_{\wt L}C\Longleftrightarrow C\left(VP_{\cH_V}+\wt LP_{\cH_V^\perp}\right)=\left(V^*+\wt L^*\right)C.
\]
The latter is equivalent to the equations
\begin{equation}\label{hvf03ab}
\left\{\begin{array}{l}
P_{\cH_V}CVP_{\cH_V}+P_{\cH_V}C\wt LP_{\cH_V^\perp}=V^*CP_{\cH_V}+ V^*CP_{\cH_V^\perp},\\[2mm]
P_{\cH_V^\perp}CVP_{\cH_V}+P_{\cH_V^\perp}C\wt LP_{\cH_V^\perp}=\wt L^*C.
\end{array}\right.
\end{equation}
From \eqref{abe19a} it follows the equality
$
P_{\cH_V}C\wt LP_{\cH_V^\perp}=V^*CP_{\cH_V^\perp}.
$ 
Passing to the adjoint we obtain \eqref{be403bb}. Then the second equality in \eqref{hvf03ab} yields \eqref{hvf03bc}.
\end{proof}
\begin{corollary}
If $\codim \cH_V=1$ and $\wt L\in\bB(\cH_V^\perp,\cH)$ satisfies \eqref{be403bb}, then the operator $\wt\cV$ of the form \eqref{hvf02a} is a $C$-self-adjoint
extension of $V$.
\end{corollary}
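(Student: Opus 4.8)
The plan is to reduce everything to Proposition \ref{vfh1aa}. That proposition says a bounded extension $\wt\cV$ of the form \eqref{hvf02a} is $C$-self-adjoint precisely when $\wt L$ satisfies both \eqref{be403bb} and \eqref{hvf03bc}. Since the hypothesis already supplies \eqref{be403bb}, the entire task collapses to showing that the second condition \eqref{hvf03bc} is automatically fulfilled as soon as $\codim\cH_V=1$, with no additional requirement on $\wt L$. Thus I would state at the outset that it suffices to verify \eqref{hvf03bc} in the case $\dim\cH_V^\perp=1$.

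First I would fix a unit vector $e$ spanning the one-dimensional subspace $\cH_V^\perp$, so that $P_{\cH_V^\perp}x=(x,e)e$ for every $x\in\cH$. Next I would observe that both sides of \eqref{hvf03bc}, namely $f\mapsto P_{\cH_V^\perp}C\wt Lf$ and $f\mapsto\wt L^*Cf$, are \emph{anti-linear} in $f$, each being the composition of the linear maps $\wt L$, $\wt L^*$, $P_{\cH_V^\perp}$ with the single anti-linear map $C$; hence it is enough to check the identity for $f=e$. Finally, since $\wt L^*\in\bB(\cH,\cH_V^\perp)$ forces $\wt L^*Ce\in\cH_V^\perp=\span\{e\}$, and $P_{\cH_V^\perp}C\wt Le\in\span\{e\}$ by construction, the vector identity \eqref{hvf03bc} for $f=e$ is equivalent to the single scalar equality $(C\wt Le,e)=(\wt L^*Ce,e)$.

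The last step is to verify this scalar identity, and this is the only substantive point: it is nothing but the symmetry \eqref{jcyjdf} of the conjugation. Indeed, $(\wt L^*Ce,e)=(Ce,\wt Le)$ by the definition of the adjoint, while $(C\wt Le,e)=(Ce,\wt Le)$ by \eqref{jcyjdf} applied with $f=\wt Le$ and $g=e$; the two expressions coincide, so \eqref{hvf03bc} holds and the corollary follows. I expect the only conceptual obstacle to be recognizing why codimension one is special. In general, relative to an orthonormal basis $\{e_i\}$ of $\cH_V^\perp$, condition \eqref{hvf03bc} encodes the \emph{symmetry} (not Hermiticity) of the matrix $\big((Ce_i,\wt Le_j)\big)_{i,j}$, a genuine constraint that becomes vacuous exactly when the matrix is $1\times1$. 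Once this observation is in place, the verification reduces to the one-line application of \eqref{jcyjdf} above.
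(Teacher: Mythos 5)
Your proof is correct and takes essentially the same route as the paper's: both reduce the claim via Proposition \ref{vfh1aa} to verifying \eqref{hvf03bc}, and then check it on the single unit vector spanning $\cH_V^\perp$ by combining the adjoint identity $(\wt L^*Ce,e)=(Ce,\wt Le)$ with the symmetry \eqref{jcyjdf} of the conjugation. Your explicit appeal to anti-linearity to justify testing only on $e$ is a detail the paper leaves implicit, but the argument is the same.
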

\begin{proof}
Using \eqref{jcyjdf} we check the equality \eqref{hvf03bc} for any $\wt L\in\bB(\cH_V^\perp, \cH)$.  Since $\codim \cH_V=1$, we can write $\cH_V^\perp=\dC e_0$, where
$e_0\in \cH_V^\perp$, $||e_0||=1$. Then
\[
\begin{array}{l}
\wt L^*Ce_0=(\wt L^*Ce_0,e_0)e_0
=(Ce_0,\wt Le_0)e_0=(C\wt Le_0, e_0)e_0\\
=(P_{\cH_V^\perp} C\wt L e_0,e_0)e_0=P_{\cH_V^\perp} C\wt Le_0.
\end{array}.
\]
Hence \eqref{hvf03bc} is valid.  If, in addition, $\wt L^*\in\bB(\cH,\cH_V^\perp)$ is an extension of $Z_0$, then the equality \eqref{hvf03bc} holds as
well. Therefore $\wt\cV=VP_{\cH_V}+\wt L P_{\cH_V^\perp}$ is a $C$-self-adjoint extension of $V$.
\end{proof}

\begin{theorem}\label{kvit20ab}
\begin{enumerate}
\item[(1)] Each bounded $C$-slfadjoint extension $\wt T$ of $V$ has the form
\begin{equation}\label{vfh01b}
\wt T_{\wt Z}=\half\left(\wt\cV_{\wt Z}+C\wt\cV^*_{\wt Z}C\right),
\end{equation}
where $\wt\cV_{\wt Z}$ is given by \eqref{hvf02a} and $\wt Z=\wt L^*\in\bB(\cH,\cH_V^\perp)$ is an extension on $\cH$ of the operator $Z_0$ defined in
\eqref{vfh1a}. Conversely, each  extension $\wt Z\in\bB(\cH,\cH_V^\perp)$ of the operator $Z_0$ determines a $C$-self-adjoint  bounded extension
$\wt T_{\wt Z}$ of $V$ of the form \eqref{vfh01b}.
\item[(2)] The equalities
\begin{equation}\label{vfh04caa}
VP_{\cH_V}+CV^*CP_{\cH_V^\perp}=\wt T_{\wt Z_0}=\half (\wt\cV_{\wt Z_0}+C\wt\cV^*_{\wt Z_0}C),
\end{equation}
hold, where
\begin{equation}\label{hvf04tt}
\wt Z_0 =Z_0P_{C\cH_V};
\end{equation}
\item[(3)] If $\wt Z=Z_0P_{C\cH_V}+\wt YP_{C\cH_V^\perp}$, where $\wt Y\in\bB(C\cH_V^\perp,\cH_V^\perp)$, then
\begin{equation}\label{hvf04ss}
\wt T_{\wt Z}=\wt T_{\wt Z_0}+\half(\wt Y^*+C\wt YC)P_{\cH_V^\perp}.
\end{equation}
\end{enumerate}
\end{theorem}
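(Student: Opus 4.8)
The backbone of all three parts is Lemma \ref{kvit20a}: for every $A\in\bB(\cH)$ the symmetrization $\half(A+CA^*C)$ is automatically $C$-self-adjoint. Applying this with $A=\wt\cV_{\wt Z}$ shows that each operator $\wt T_{\wt Z}$ defined by \eqref{vfh01b} is $C$-self-adjoint with no further work, so the whole content of part (1) reduces to two points: that $\wt T_{\wt Z}$ really extends $V$ when $\wt Z$ extends $Z_0$, and that conversely every $C$-self-adjoint bounded extension arises this way. The plan is to settle the extension property by a direct computation, and the converse by exploiting the fact that a $C$-self-adjoint operator equals its own symmetrization.

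For the extension property I would first record the adjoint $\wt\cV_{\wt Z}^*=V^*+\wt Z$, obtained from $\wt\cV_{\wt Z}=VP_{\cH_V}+\wt Z^*P_{\cH_V^\perp}$ by taking adjoints term by term (here $V^*$ has range in $\cH_V$ and $\wt Z$ range in $\cH_V^\perp$). Then for $h\in\cH_V$ one has $Ch\in C\cH_V$, so $\wt Z Ch=Z_0 Ch=P_{\cH_V^\perp}CVh$ by \eqref{vfh1a}, while $V^*Ch=P_{\cH_V}CVh$ by \eqref{abe19a}; adding gives $\wt\cV_{\wt Z}^*Ch=CVh$, hence $C\wt\cV_{\wt Z}^*Ch=Vh$. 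Since also $\wt\cV_{\wt Z}h=Vh$, formula \eqref{vfh01b} yields $\wt T_{\wt Z}h=Vh$, i.e. $\wt T_{\wt Z}\uphar\cH_V=V$. For the converse, given a $C$-self-adjoint bounded extension $\wt T$, I would write it in the form \eqref{hvf02a} as $\wt T=VP_{\cH_V}+\wt L P_{\cH_V^\perp}$; Proposition \ref{vfh1aa}, condition \eqref{be403bb}, shows that $\wt Z:=\wt L^*$ extends $Z_0$, and because $\wt T$ is $C$-self-adjoint we have $C\wt T^*C=\wt T$, whence $\wt T_{\wt Z}=\half(\wt T+C\wt T^*C)=\wt T$.

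For part (2) I would compute $\wt\cV_{\wt Z_0}$ for $\wt Z_0=Z_0P_{C\cH_V}$. Taking adjoints gives $\wt Z_0^*=Z_0^*$ on $\cH_V^\perp$, and \eqref{hvf03bbc} identifies $Z_0^*$ with $CV^*C$ there, so $\wt\cV_{\wt Z_0}=VP_{\cH_V}+CV^*CP_{\cH_V^\perp}$. It then remains to see that this operator is already $C$-self-adjoint, either by checking conditions \eqref{be403bb} and \eqref{hvf03bc} of Proposition \ref{vfh1aa} directly (both sides of \eqref{hvf03bc} vanish on $\cH_V^\perp$, since $V^*Cf\in\cH_V$ while $P_{C\cH_V}Cf=0$ because $Cf\in C\cH_V^\perp=(C\cH_V)^\perp$), or by invoking Raikh's Theorem \ref{abe27a} with $S=0$. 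Being $C$-self-adjoint it coincides with its own symmetrization, which is precisely $\wt T_{\wt Z_0}$, giving \eqref{vfh04caa}.

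Finally, for part (3) I would write $\wt Z=\wt Z_0+W$ with $W:=\wt Y P_{C\cH_V^\perp}$, so that $\wt\cV_{\wt Z}=\wt\cV_{\wt Z_0}+W^*P_{\cH_V^\perp}$ and hence, by the additivity of the symmetrization in \eqref{vfh01b}, $\wt T_{\wt Z}=\wt T_{\wt Z_0}+\half\bigl(W^*P_{\cH_V^\perp}+CP_{\cH_V^\perp}WC\bigr)$. The only remaining task is to simplify the correction term. Using $W^*g=\wt Y^*g$ for $g\in\cH_V^\perp$ one gets $W^*P_{\cH_V^\perp}=\wt Y^*P_{\cH_V^\perp}$, and using the projection identity \eqref{hvf19a} in the form $P_{C\cH_V^\perp}C=CP_{\cH_V^\perp}$ one gets $CP_{\cH_V^\perp}WC=C\wt Y CP_{\cH_V^\perp}$; together these yield \eqref{hvf04ss}. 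I expect the main obstacle throughout to be purely one of bookkeeping: $V$, $Z_0$ and $\wt Z$ act between the four different subspaces $\cH_V$, $C\cH_V$, $\cH_V^\perp$, $C\cH_V^\perp$, and one must keep the domains, the ranges, and the action of $C$ (which interchanges $\cH_V^\perp$ and $C\cH_V^\perp$ and preserves orthogonality) straight when forming adjoints and projections.
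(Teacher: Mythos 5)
Your proposal is correct and follows essentially the same route as the paper: Lemma \ref{kvit20a} for the $C$-self-adjointness of the symmetrization, Proposition \ref{vfh1aa} together with \eqref{abe19a} and \eqref{vfh1a} for the extension property and the converse, and the identities \eqref{hvf03bbc} and \eqref{hvf19a} for parts (2) and (3). The only (harmless) organizational difference is that you obtain (2) by observing that $\wt\cV_{\wt Z_0}=VP_{\cH_V}+CV^*CP_{\cH_V^\perp}$ is already $C$-self-adjoint and (3) by linearity of $A\mapsto\half\left(A+CA^*C\right)$, whereas the paper expands $\wt\cV_{\wt Z}+C\wt\cV^*_{\wt Z}C$ in a single combined computation and reads off both statements from it.
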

\begin{proof}

(1): By Lemma \ref{kvit20a}, the operator $\wt T_{\wt Z}$ is $C$ self-adjoint. If $\wt\cV$ is a $C$-self-adjoint extension of $V$, then the equality
$\wt\cV=C\wt\cV^*C$ implies that $C\wt\cV^*C\uphar \cH_V=V$. Using \eqref{hvf02a}, for $h\in\cH_V$ we obtain
\[
\begin{array}{l}
Vh=C\wt\cV^*Ch=C(V^*+\wt L^*)Ch=CV^*Ch+C\wt L^*Ch\\
=CP_{\cH_V}CVh+C\wt L^*Ch =CCVh-CP_{\cH_V^\perp}CVh+C\wt L^*Ch=Vh-CP_{\cH_V^\perp}CVh+C\wt L^*Ch.
\end{array}
\]

Hence $CP_{\cH_V^\perp}CVh+C\wt L^*Ch$, i.e., $\wt L^*Ch=P_{\cH_V^\perp}CVh,$ $\forall h\in\cH_V$. Thus $\wt L^*\uphar C\cH_V=Z_0.$
Set
$$\wt Z:=\wt L^*\in\bB(\cH,\cH_V^\perp),\; \wt Y:=\wt L^*\uphar C\cH_V^\perp\in\bB(C\cH_V^\perp,\cH_V^\perp).$$ Then
\begin{equation}\label{hvf04a}
\left\{\begin{array}{l}
\wt Z=\wt L^*=Z_0P_{C\cH_V}+\wt YP_{C\cH_V^\perp}\\
\wt Z^*=Z_0^*+\wt Y^*\\
\wt\cV=\wt\cV_{\wt Z}=VP_{\cH_V}+\wt Z^*P_{\cH_V^\perp}=VP_{\cH_V}+(Z_0^*+\wt Y^*)P_{\cH_V^\perp}\\
\wt\cV^*=\wt\cV^*_{\wt Z}=V^*+\wt Z=V^*+ Z_0P_{C\cH_V}+\wt YP_{C\cH_V^\perp}
\end{array}\right..
\end{equation}
Since $\wt\cV=C\wt\cV^*C$, we get $\wt\cV=\half\left(\wt\cV_{\wt Z}+C\wt\cV^*_{\wt Z}C\right).$

Now suppose that $\wt Z\in\bB(\cH,\cH_V^\perp)$ is an extension of $Z_0$, $\wt\cV_{\wt Z}=VP_{\cH_V}+\wt Z^*P_{\cH_V^\perp}$ and
$\wt T_{\wt Z}=\half\left(\wt\cV_{\wt Z}+C\wt\cV^*_{\wt Z}C\right)$. Then $C\wt T^*_{\wt Z}C=\wt T_{\wt Z}$
and due to \eqref{abe19a} and \eqref{vfh1a} we have $T_{\wt Z}\uphar\cH_V=V$.

(2) and (3): Using  \eqref{hvf03bbc}, \eqref{hvf04a}, and since $C\wt\cV^*_{\wt Z}CP_{\cH_V}=VP_{\cH_V}$, $P_{C\cH_V} CP_{\cH_V^\perp}=0$, we
derive
\[
\begin{array}{l}
\wt\cV_{\wt Z}+C\wt\cV^*_{\wt Z} C=VP_{\cH_V}+\wt Z^*P_{\cH_V^\perp}+C\wt\cV^*_{\wt Z}CP_{\cH_V}+C\wt\cV^*_{\wt Z}CP_{\cH_V^\perp}\\
= VP_{\cH_V}+(Z_0^*+\wt Y^*)P_{\cH_V^\perp} +VP_{\cH_V}+C\left(V^*+Z_0P_{C\cH_V}
+\wt YP_{C\cH_V^\perp}\right)CP_{\cH_V^\perp}\\
=2VP_{\cH_V}+2 CV^*CP_{\cH_V^\perp}+(\wt Y^*+C\wt YC)P_{\cH_V^\perp}.
\end{array}
\]
Thus,
\[
\wt T_{\wt Z}=\half\left(\wt\cV_{\wt Z}+C\wt\cV^*_{\wt Z}C\right)=VP_{\cH_V}+ CV^*CP_{\cH_V^\perp}+\half(\wt Y^*+C\wt YC)P_{\cH_V^\perp}.
\]
If $\wt Y=0\in\bB(C\cH_V^\perp,\cH_V^\perp)$, i.e., $\wt Z=\wt Z_0$, see \eqref{hvf04tt}, then
equality \eqref{vfh04caa} holds. Therefore, for $\wt Z=Z_0P_{C\cH_V}+\wt YP_{C\cH_V^\perp}$ the representation \eqref{hvf04ss} of $\wt T_{\wt Z}$
is valid.
\end{proof}

\section{Contractive $C$-self-adjoint extensions of a non-densely $C$-symmetric contraction}\label{kbg08a}
A special case is that $V$ is a $C$ symmetric  non-densely defined contraction  and $\wt V$ is a $C$-self-adjoint contractive extension of $V$.

The following result belongs to M.G. Crandall \cite{Crandal}.
\begin{theorem}\label{abe21ba}
The formula
\begin{equation}\label{abe18a}
\wt V_{\wt K}:=VP_{\cH_V}+\cD_{V^*}\wt KP_{\cH_V^\perp}
\end{equation}
gives a bijective correspondence between all contractive extensions $\wt V$ on $\cH$ of a non-densely defined contraction $V$ and all contractions
$\wt K\in\bB(\cH_V^\perp,\sD_{V^*})$, where $V^*\in\bB(\cH,\cH_V)$ is the adjoint of $V\in\bB(\cH_V,\cH)$ and $\cD_{V^*}:=(I-VV^*)^\half$,
$\sD_{V^*}:=\cran\cD_{V^*}.$
\end{theorem}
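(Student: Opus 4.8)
The plan is to reduce the contractivity requirement to a single operator inequality and then invoke the factorization lemma of Douglas. First I would recall that, by \eqref{hvf02a}, every bounded extension of $V$ has the form $\wt V=VP_{\cH_V}+\wt LP_{\cH_V^\perp}$ for a uniquely determined $\wt L\in\bB(\cH_V^\perp,\cH)$, and that $\wt L\mapsto\wt V$ is a bijection onto all bounded extensions. Thus the task is precisely to single out those $\wt L$ for which $\wt V$ is a contraction and to reparameterize them.

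Next I would compute $\wt V\wt V^*$. Since $\ran V^*\subseteq\cH_V$ and $\ran\wt L^*\subseteq\cH_V^\perp$, one has $P_{\cH_V}V^*=V^*$ and $P_{\cH_V^\perp}\wt L^*=\wt L^*$, while $P_{\cH_V}P_{\cH_V^\perp}=0$ kills the cross terms; hence
\[
\wt V\wt V^*=\left(VP_{\cH_V}+\wt LP_{\cH_V^\perp}\right)\left(P_{\cH_V}V^*+P_{\cH_V^\perp}\wt L^*\right)=VV^*+\wt L\wt L^*.
\]
Because $\wt V$ is a contraction if and only if $\wt V\wt V^*\le I$, this is equivalent to
\[
\wt L\wt L^*\le I-VV^*=\cD_{V^*}^2 .
\]

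The crux is the third step, the application of Douglas' factorization lemma to the self-adjoint operator $\cD_{V^*}$. The inequality $\wt L\wt L^*\le\cD_{V^*}\cD_{V^*}^*$ holds if and only if there is a contraction $\wt K$ with $\wt L=\cD_{V^*}\wt K$; moreover $\wt K$ is uniquely determined once one imposes $\ran\wt K\subseteq\cran\cD_{V^*}=\sD_{V^*}$, i.e. once $\wt K$ is regarded as an element of $\bB(\cH_V^\perp,\sD_{V^*})$. This uniqueness is exactly what promotes the factorization into a bijective correspondence $\wt K\leftrightarrow\wt L$ between all contractions $\wt K\in\bB(\cH_V^\perp,\sD_{V^*})$ and all $\wt L\in\bB(\cH_V^\perp,\cH)$ satisfying the preceding inequality.

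Finally, substituting $\wt L=\cD_{V^*}\wt K$ into \eqref{hvf02a} gives formula \eqref{abe18a}, and composing the bijection $\wt K\leftrightarrow\wt L$ with the bijection $\wt L\leftrightarrow\wt V$ of \eqref{hvf02a} yields the asserted bijective correspondence between contractions $\wt K\in\bB(\cH_V^\perp,\sD_{V^*})$ and contractive extensions $\wt V$ of $V$. I expect the main obstacle to be the Douglas step and in particular its uniqueness clause, since it is exactly the range restriction $\ran\wt K\subseteq\sD_{V^*}$ that forces the codomain of $\wt K$ to be $\sD_{V^*}$ rather than all of $\cH$ and thereby makes the parameterization one-to-one.
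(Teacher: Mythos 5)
Your argument is correct. Note, however, that the paper offers no proof of Theorem \ref{abe21ba} at all: it is stated as a known result and attributed to Crandall \cite{Crandal}, so there is no in-text argument to compare against. Your route --- writing a bounded extension as $\wt V=VP_{\cH_V}+\wt LP_{\cH_V^\perp}$ via \eqref{hvf02a}, computing $\wt V\wt V^*=VV^*+\wt L\wt L^*$ (the cross terms vanish because $\ran V^*\subseteq\cH_V$ and $\ran\wt L^*\subseteq\cH_V^\perp$), reducing contractivity to $\wt L\wt L^*\le\cD_{V^*}^2$, and then factoring $\wt L=\cD_{V^*}\wt K$ by Douglas' lemma --- is the standard proof of Crandall's theorem and is fully consistent with the formulas the paper uses afterwards, e.g.\ $\wt V^*_{\wt K}=V^*+\wt K^*\cD_{V^*}$ and \eqref{abe23a}. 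The two points you flag as delicate are exactly the right ones and both go through: $\wt V$ is contractive iff $\wt V\wt V^*\le I$, and the Douglas factor is unique subject to $\ran\wt K\subseteq\cran\cD_{V^*}=\sD_{V^*}$ because $\ker\cD_{V^*}\cap\cran\cD_{V^*}=\{0\}$ for the self-adjoint operator $\cD_{V^*}$; this is what makes the correspondence $\wt K\leftrightarrow\wt V$ bijective rather than merely surjective.
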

Note that the adjoint $\wt V^*_{\wt K}$ of $\wt V_{\wt K}$  is of the form
\[
\wt V^*_{\wt K}=V^*+\wt K^*\cD_{V^*}
\]
and
\begin{equation}\label{abe23a}
\left\{\begin{array}{l}
||\cD_{\wt V_{\wt K}}f||^2=||\cD_VP_{\cH_V}f-V^*\wt KP_{\cH_V^\perp} f||^2+||\cD_{\wt K}P_{\cH_V^\perp}f||^2,\\[2mm]
||\cD_{\wt V^*_{\wt K}}f||^2=||\cD_{\wt K^*}\cD_{V^*}f||^2,\; f\in\cH.
\end{array}\right.
\end{equation}
\begin{proposition}\label{abe18aa}
A contractive extension $\wt V$ of a $C$-symmetric contraction $V$ is $C$-self-adjoint if and only if the contraction $\wt K\in\bB(\cH_V^\perp,\sD_{V^*})$ in
\eqref{abe18a} satisfies the following conditions:
\begin{equation}\label{abe18bb}
\wt K^*\cD_{V^*} Ch=P_{\cH_V^\perp}CVh\;\;
\forall h\in \cH_V,
\end{equation}
\begin{equation}\label{abe18bc}
P_{\cH_V^\perp}C\cD_{V^*}\wt Kf=\wt K^*\cD_{V^*}Cf\;\;\forall f\in \cH_V^\perp.
\end{equation}
\end{proposition}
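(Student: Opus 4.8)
The plan is to deduce this proposition directly from Proposition \ref{vfh1aa}, which already characterizes $C$-self-adjointness of an \emph{arbitrary} bounded extension $\wt\cV=VP_{\cH_V}+\wt LP_{\cH_V^\perp}$ through the two conditions \eqref{be403bb} and \eqref{hvf03bc} on the parameter $\wt L\in\bB(\cH_V^\perp,\cH)$. The key observation is that Crandall's formula \eqref{abe18a} exhibits every contractive extension $\wt V_{\wt K}$ as such a bounded extension with the special choice $\wt L:=\cD_{V^*}\wt K$. Since $C$-self-adjointness is a purely algebraic relation, insensitive to the contractivity of $\wt K$, it should suffice to substitute this $\wt L$ into the conditions of Proposition \ref{vfh1aa}.

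First I would record the identification. Comparing \eqref{abe18a} with \eqref{hvf02a} gives $\wt V_{\wt K}=\wt\cV_{\wt L}$ for $\wt L=\cD_{V^*}\wt K\in\bB(\cH_V^\perp,\cH)$, and hence, using that $\cD_{V^*}=(I-VV^*)^\half$ is self-adjoint, $\wt L^*=\wt K^*\cD_{V^*}\in\bB(\cH,\cH_V^\perp)$, which matches the expression $\wt V^*_{\wt K}=V^*+\wt K^*\cD_{V^*}$ stated after Theorem \ref{abe21ba}. Next I would substitute into condition \eqref{be403bb}, i.e. $\wt L^*\uphar C\cH_V=Z_0$: evaluating at $Ch$ with $h\in\cH_V$ and recalling from \eqref{vfh1a} that $Z_0(Ch)=P_{\cH_V^\perp}CVh$, this becomes $\wt K^*\cD_{V^*}Ch=P_{\cH_V^\perp}CVh$, which is exactly \eqref{abe18bb}. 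Likewise, substituting $\wt L=\cD_{V^*}\wt K$ and $\wt L^*=\wt K^*\cD_{V^*}$ into \eqref{hvf03bc} turns $P_{\cH_V^\perp}C\wt Lf=\wt L^*Cf$ into $P_{\cH_V^\perp}C\cD_{V^*}\wt Kf=\wt K^*\cD_{V^*}Cf$ for $f\in\cH_V^\perp$, which is exactly \eqref{abe18bc}. By Proposition \ref{vfh1aa} the extension $\wt V_{\wt K}$ is $C$-self-adjoint if and only if both conditions hold, proving the claim.

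As a self-contained alternative I would re-run the argument of Proposition \ref{vfh1aa} directly. Expand the defining relation $C\wt V_{\wt K}=\wt V^*_{\wt K}C$ as $C(VP_{\cH_V}+\cD_{V^*}\wt KP_{\cH_V^\perp})=(V^*+\wt K^*\cD_{V^*})C$ and split it by applying $P_{\cH_V}$ and $P_{\cH_V^\perp}$ on the left; since $\ran V^*\subseteq\cH_V$ and $\ran(\wt K^*\cD_{V^*})\subseteq\cH_V^\perp$, the right-hand side separates cleanly into its two components. The $P_{\cH_V^\perp}$-block, evaluated on $\cH_V$ and on $\cH_V^\perp$ separately, yields \eqref{abe18bb} and \eqref{abe18bc} at once, giving necessity; in the $P_{\cH_V}$-block the term $P_{\cH_V}CVP_{\cH_V}$ cancels against $V^*CP_{\cH_V}$ by the $C$-symmetry identity \eqref{abe19a}, leaving the off-diagonal equation $P_{\cH_V}C\cD_{V^*}\wt KP_{\cH_V^\perp}=V^*CP_{\cH_V^\perp}$, which I would show is equivalent to \eqref{abe18bb}, thereby closing the sufficiency direction.

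The main obstacle, and the only genuinely nontrivial point, is this last equivalence: the antilinearity of $C$ forbids ordinary adjoint manipulations, so one must move $C$ across the inner product using the symmetry relation \eqref{jcyjdf} rather than the usual rule, in order to transform $P_{\cH_V}C\cD_{V^*}\wt KP_{\cH_V^\perp}=V^*CP_{\cH_V^\perp}$ into the form \eqref{abe18bb}. In the reduction to Proposition \ref{vfh1aa} this subtlety is already absorbed into the phrase ``passing to the adjoint'' there, which is precisely why I would prefer that route; everything else is bookkeeping with the projections $P_{\cH_V},P_{\cH_V^\perp}$ and with the self-adjointness of $\cD_{V^*}$.
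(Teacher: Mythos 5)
Your proposal is correct and follows essentially the paper's own route: the ``self-contained alternative'' in your third paragraph is precisely the paper's proof (split $C\wt V_{\wt K}=\wt V^*_{\wt K}C$ by $P_{\cH_V}$ and $P_{\cH_V^\perp}$, cancel the diagonal block via \eqref{abe19a}, and pass to the adjoint — the antilinear step you rightly flag as the only delicate point, handled via \eqref{jcyjdf}). Your primary route, substituting $\wt L=\cD_{V^*}\wt K$ and $\wt L^*=\wt K^*\cD_{V^*}$ into Proposition \ref{vfh1aa} so that \eqref{be403bb} and \eqref{hvf03bc} become \eqref{abe18bb} and \eqref{abe18bc} verbatim, is a legitimate streamlining that the paper does not make, but it carries no new mathematical content since Proposition \ref{vfh1aa} is proved by the identical computation.
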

\begin{proof}
Assume that $\wt V_{\wt K}=VP_{\cH_V}+\cD_{V^*}\wt KP_{\cH_V^\perp}$ is a $C$-self-adjoint contractive extension of $V$, i.e.,
\[
C\wt V_{\wt K}=\wt V^*_{\wt K}C\Longleftrightarrow C\left(VP_{\cH_V}+\cD_{V^*}\wt KP_{\cH_V^\perp}\right)=\left(V^*+\wt K^*\cD_{V^*}\right)C.
\]
The latter is equivalent to the equalities
\begin{equation}\label{abe19ab}
\left\{\begin{array}{l}
P_{\cH_V}CVP_{\cH_V}+P_{\cH_V}C\cD_{V^*}\wt KP_{\cH_V^\perp}=V^*CP_{\cH_V}+ V^*CP_{\cH_V^\perp},\\[2mm]
P_{\cH_V^\perp}CVP_{\cH_V}+P_{\cH_V^\perp}C\cD_{V^*}\wt KP_{\cH_V^\perp}=\wt K^*\cD_{V^*}C.
\end{array}\right.
\end{equation}
From \eqref{abe19a} it follows that
\[
P_{\cH_V}C\cD_{V^*}\wt KP_{\cH_V^\perp}=V^*CP_{\cH_V^\perp}.
\]
Passing to the adjoint yields \eqref{abe18bb} and \eqref{abe18bc} follows from
 the second equality in \eqref{abe19ab}.
\end{proof}
Note that if $\wt V$ is a $C$-self-adjoint contractive extension of $V$, then
\begin{equation}\label{abe23aa}
C\cD_{\wt V^*}^2C=\cD^2_{\wt V}.
\end{equation}Moreover, as  proved in \cite[Lemma 3.1]{Messirdi},    for all continuous functions $\f$ on $[0,1]$ the equality
\[
\f(\wt V^*\wt V)=C\f(\wt V\wt V^*)C
\]
holds.

Let $\dot{X_0}$ denote the mapping
\begin{equation}\label{abe17b}
\dot{X_0}:\cD_{V^*}Ch\mapsto P_{\cH_V^\perp}CVh,\; h\in\cH_V.
\end{equation}
Taking  the equality \eqref{abe19a} into account, for each $h\in\cH_V$ we have
\[\begin{array}{l}
||\cD_{V^*}Ch||^2-|| P_{\cH_V^\perp}CVh||^2=||Ch||^2-||V^*Ch||^2-|| P_{\cH_V^\perp}CVh||^2\\
=||h||^2-||P_{\cH_V}CVh||^2-|| P_{\cH_V^\perp}CVh||^2=||h||^2-||CVh||^2=||h||^2-||Vh||^2\ge 0.
\end{array}
\]
It follows that the equality $\cD_{V^*}Ch=0$ ($h\in\cH_V$) yields  $P_{\cH_V^\perp}CVh=0$. Therefore $\dot{X_0}$ is a linear operator and, moreover, $\dot{X_0}$ is a contraction. Besides, $\dot{X_0}$ is an isometry if and only if $V$ is isometry.

Hence $\dot{X_0}$ admits a continuation to a contraction in $\bB(\overline{\cD_{V^*}C\cH_V},\cH_V^\perp)$. We will use the notation $X_0$ for this continuation.

\begin{proposition}\label{bkm29a}
(1) If $\dom X_0=\{0\}$, then $V$ is isometry, $CV\cH_V\subseteq \cH_V$ and
\begin{equation}\label{bkm29ab}
VCVh=Ch\;\;\forall h\in\cH_V.
\end{equation}
(2) If \eqref{bkm29ab} holds, then $V$ is an isometry, $C\cH_V\subseteq\ran V$ and, therefore, $\dom X_0=\{0\}.$
Moreover, the equalities
\[
CV\cH_V=\cH_V,\;\;C\cH_V=\ran V
\]
hold.
\end{proposition}
\begin{proof}
(1) Assume $\dom X_0=\{0\}$, Then from \eqref{abe17b} we get the inclusion $C\cH_V\subseteq\ker\cD_{V^*}$ and $P_{\cH_V^\perp}CVh=0$ for all $h\in\cH_V$.
Thus, $VV^*Ch=Ch$ for all $h\in\cH_V$ and $CV\cH_V\subseteq\cH_V$. Now $V^*Ch=P_{\cH_V}CVh=CVh,$ see \eqref{abe19a}, implies \eqref{bkm29ab}. Since $V$ is a contraction and $C$ is conjugation, we get that $V$ is an isometry.

(2) As shown above, \eqref{bkm29ab} implies that $V$ is an isometry. Hence $V^*(VCVh)=V^*Ch$ $\Longrightarrow CVh=V^*Ch$, $h\in\cH_V$. 
Then, due to \eqref{bkm29ab}, we get
$$Ch=VCVh=VV^*Ch \Longrightarrow  Ch\in\ker \cD_{V^*}=\ran V\;\;\forall h\in\cH_V\Longrightarrow\dom X_0=\{0\}.$$

If $\f\in \cH_V$ is orthogonal to $CV\cH_V$, then
$V^*C\f\in \cH_V^\perp$. This inclusion contradicts to $\ran V^*\subseteq\cH_V$. Hence $V^*C\f=0$  and \eqref{bkm29ab} implies $CV\f=0$ $\Longrightarrow\f=0$.

Since $CVh=V^*Ch$ for all $h\in\cH_V$ and $V$ is an isometry, we get that the operator $V^*\uphar C\cH_V$ is isometric and $V^*C\cH_V=\cH_V (=\dom V)$. Hence
$C\cH_V=\ran V.$
The proof is complete.  
\end{proof}

Proposition \ref{abe18aa} and equality \eqref{abe18bb} say that if $\wt V_{\wt K}$ is a $C$-self-adjoint contractive extension of $V$, then the operator $\wt
K^*$ is a contractive extension of the operator $X_0$ on $\sD_{V^*}$.
All contractive extensions of the contraction $X_0$ can be described as follows.

Define the following subspaces of $\sD_{V^*}:$
\begin{equation}\label{cherv07b}
\begin{array}{l}
\cL_0:=\dom X_0=\overline{\cD_{V^*}C\cH_V},\\[2mm]
\cL^\perp_0:=\sD_{V^*}\ominus\cL_0=\{y\in\cD_{V^*}:\cD_{V^*}y\in C\cH_V^\perp\}=\cD_{V^*}^{-1}(C\cH_V^\perp).
\end{array}
\end{equation}

Clearly,
\[
\cL^\perp_0=\{0\}\Longleftrightarrow \ran \cD_{V^*}\cap C\cH_V^\perp=\{0\}.
\]
Due to Proposition \ref{bkm29a} the equality $\cL^\perp_0=\sD_{V^*}(\Longleftrightarrow\cL_0=\{0\})$ holds if and only if \eqref{bkm29ab} holds.

Let $X_0^*\in\bB(\cH_V^\perp, \cL_0 )$ be the adjoint of $X_0\in\bB(\cL_0 ,\cH_V^\perp)$. The operator $X_0^*$ is the adjoint of the operator
\[
\wt X_0:=X_0P_{\cL_0}
\]
as well. The equality \eqref{abe17b} implies that
\begin{equation}\label{vfh07a}
CP_{\cH_V}C\cD_{V^*}X_0^*P_{\cH_V^\perp}=CV^*CP_{\cH_V^\perp}.
\end{equation}
Observe that \eqref{vfh1a}, \eqref{hvf02aa}, \eqref{abe17b}, and \eqref{vfh07a} yield
\[
\begin{array}{l}
X_0P_{\cL_0}\cD_{V^*}Ch=Z_0Ch=P_{\cH_V^\perp}CVh\; \;h\in\cH_V\\
CV^*CP_{\cH_V^\perp}f=Z^*_0f=P_{C\cH_V}\cD_{V^*}X^*_0f=CP_{\cH_V}C\cD_{V^*}X^*_0f\\
=\cD_{V^*}X^*_0f-CP_{\cH_V^\perp}C\cD_{V^*}X^*_0f\;\;\forall f\in\cH_V^\perp.
\end{array}
\]
Therefore
\begin{equation}
\label{hvf08aab}
\cD_{V^*}X^*_0f=CV^*Cf+CP_{\cH_V^\perp}C\cD_{V^*}X^*_0f=CV^*Cf+P_{C\cH_V^\perp}\cD_{V^*}X^*_0f,\;\; f\in\cH_V^\perp.
\end{equation}
By Theorem \ref{abe21ba} all contractive extensions of $X_0$ on $\sD_{V^*}$ are of the form
\begin{equation}\label{hvf06a}
\wt X=X_0 P_{\cL_0}+ \cD_{X^*_0}\wt YP_{\cL_0^\perp},\; \quad\mbox{where} \quad \wt Y\in \bB(\cL_0^\perp,\sD_{X^*_0})\quad\mbox{is a contraction}.
\end{equation}

Our next aim is to prove the following theorem.
\begin{theorem}\label{abe17a}
Let $V$ be a $C$-symmetric contraction, $\dom V=\cH_V$. Then:
\begin{enumerate}
\item[(1)] Each $C$-self-adjoint contractive extension $\wt W$ of $V$ is of the form
\begin{equation}\label{abe21aa}
\wt W=\half\left(\wt V_{\wt K}+C\wt V^*_{\wt K}C\right), \; \wt K=\wt X^*,
\end{equation}
where
$\wt X\in\bB(\sD_{V^*},\cH_V^\perp)$ is a contractive extension of the contraction $X_0$, defined in \eqref{abe17b}, see \eqref{hvf06a}.
Conversely, each contractive extension $\wt X\in\bB(\sD_{V^*},\cH_V^\perp)$ of the contraction $X_0$ determines a $C$-self-adjoint contractive extension
$\wt W$ of $V$ of the form \eqref{abe21aa}.
\item[(2)] Setting
\begin{equation}\label{vfh07aad}
\wt W_0:=\half\left(\wt V_{X_0^*}+C\wt V^*_{ X_0^*}C\right),
\end{equation}
we have
\begin{equation}\label{vfh07abc}
\begin{array}{l}
\wt W_0=\wt V_{X^*_0}+\half\left(C\wt X_0D_{V^*}C-CP_{\cH_V^\perp}CD_{V^*}X_0^*  \right)P_{\cH_V^\perp}\\[2mm]
=VP_{\cH_V}+CV^*CP_{\cH_V^\perp}+\half\left(CP_{\cH_V^\perp}C\cD_{V^*}X^*_0+C\wt X_0 \cD_{V^*}C\right)P_{\cH_V^\perp},
\end{array}
\end{equation}
and
 the operator $\wt W$ in \eqref{abe21aa} admits the representation
\begin{equation}\label{vfh07cda}
\wt W=\wt W_0+\half\left(M(\wt Y)+CM^*(\wt Y)C\right)P_{\cH_V^\perp},
\end{equation}
where $\wt K^*$ is of the form \eqref{hvf06a} and
\[
M(\wt Y):=\cD_{V^*}\wt Y\cD_{X^*_0},\;\; \wt Y\in\bB(\sD_{X^*_0}, \cD^{-1}_{V^*}(C\cH_V^\perp))\quad\mbox{is a contraction};
\]

\end{enumerate}
\end{theorem}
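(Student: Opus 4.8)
The plan is to derive both parts from four ingredients already at hand: Crandall's parameterization (Theorem~\ref{abe21ba}), the $C$-self-adjointness criterion (Proposition~\ref{abe18aa}), the symmetrization Lemma~\ref{kvit20a}, and the identity~\eqref{hvf08aab}.

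\textbf{Part (1).} For the direct assertion I start from a $C$-self-adjoint contractive extension $\wt W$ and write $\wt W=\wt V_{\wt K}$ with a unique contraction $\wt K$ via Theorem~\ref{abe21ba}. Condition~\eqref{abe18bb} of Proposition~\ref{abe18aa} reads $\wt K^*\cD_{V^*}Ch=P_{\cH_V^\perp}CVh$, which by the definition~\eqref{abe17b} of $X_0$ means exactly that $\wt K^*$ agrees with $X_0$ on $\cD_{V^*}C\cH_V$, hence on its closure $\cL_0$; thus $\wt X:=\wt K^*$ is a contractive extension of $X_0$. Since $\wt W=C\wt W^*C$, the trivial identity $\wt W=\half(\wt W+C\wt W^*C)$ gives~\eqref{abe21aa}. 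Conversely, given a contractive extension $\wt X$ of $X_0$, set $\wt K=\wt X^*$ and define $\wt W$ by~\eqref{abe21aa}. That $\wt W$ is $C$-self-adjoint and contractive is immediate from Lemma~\ref{kvit20a} with $A=\wt V_{\wt K}$ (here $\|\wt V_{\wt K}\|\le1$). To see $\wt W\uphar\cH_V=V$, note that for $h\in\cH_V$ one has $\cD_{V^*}Ch\in\cL_0$, so $\wt K^*\cD_{V^*}Ch=X_0\cD_{V^*}Ch=P_{\cH_V^\perp}CVh$; combining this with~\eqref{abe19a} in the form $V^*Ch=P_{\cH_V}CVh$ yields $C\wt V_{\wt K}^*Ch=CV^*Ch+C\wt K^*\cD_{V^*}Ch=C(P_{\cH_V}+P_{\cH_V^\perp})CVh=Vh$, whence $\wt Wh=\half(Vh+Vh)=Vh$.

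\textbf{Part (2), the operator $\wt W_0$.} Here $\wt K=X_0^*$, so $\wt K^*=\wt X_0$ and $\wt V_{X_0^*}^*=V^*+\wt X_0\cD_{V^*}$, giving $C\wt V_{X_0^*}^*C=CV^*C+C\wt X_0\cD_{V^*}C$. Writing $\wt W_0=\wt V_{X_0^*}+\half(C\wt V_{X_0^*}^*C-\wt V_{X_0^*})$ and using that, by the computation above, $C\wt V_{X_0^*}^*C-\wt V_{X_0^*}$ vanishes on $\cH_V$, the difference equals its restriction to $\cH_V^\perp$. On $\cH_V^\perp$ the identity~\eqref{hvf08aab} converts $(CV^*C-\cD_{V^*}X_0^*)P_{\cH_V^\perp}$ into $-CP_{\cH_V^\perp}C\cD_{V^*}X_0^*P_{\cH_V^\perp}$, which is the first line of~\eqref{vfh07abc}; applying~\eqref{hvf08aab} once more to rewrite $\wt V_{X_0^*}$ as $VP_{\cH_V}+CV^*CP_{\cH_V^\perp}$ plus a correction term produces the second line.

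\textbf{Part (2), the general operator $\wt W$, and the main obstacle.} I parameterize $\wt X$ by~\eqref{hvf06a} with a contraction $G\in\bB(\cL_0^\perp,\sD_{X_0^*})$, so $\wt X=\wt X_0+\cD_{X_0^*}GP_{\cL_0^\perp}$; taking adjoints, using $\wt X_0^*=X_0^*$ and that $G^*$ maps into $\cL_0^\perp$, gives $\wt K=\wt X^*=X_0^*+G^*\cD_{X_0^*}$. Since $\cD_{V^*}$ maps $\cL_0^\perp=\cD_{V^*}^{-1}(C\cH_V^\perp)$ into $C\cH_V^\perp$, putting $\wt Y:=G^*$ yields $\wt V_{\wt K}=\wt V_{X_0^*}+M(\wt Y)P_{\cH_V^\perp}$ with $M(\wt Y)=\cD_{V^*}\wt Y\cD_{X_0^*}$ and $\ran M(\wt Y)\subseteq C\cH_V^\perp$; this also identifies the parameter $\wt Y$ of~\eqref{vfh07cda} as the adjoint of Crandall's parameter in~\eqref{hvf06a}. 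Averaging as in~\eqref{abe21aa} then gives $\wt W=\wt W_0+\half\left(M(\wt Y)P_{\cH_V^\perp}+CP_{\cH_V^\perp}M^*(\wt Y)C\right)$. The step I expect to cost the most care is moving the projection to bring this into the symmetric form~\eqref{vfh07cda}: from $\ran M(\wt Y)\subseteq C\cH_V^\perp$ one gets $P_{C\cH_V}M(\wt Y)=0$, hence $M^*(\wt Y)P_{C\cH_V}=0$, i.e.\ $M^*(\wt Y)=M^*(\wt Y)P_{C\cH_V^\perp}$; together with~\eqref{hvf19a} written as $CP_{\cH_V^\perp}C=P_{C\cH_V^\perp}$ and with $P_{\cH_V^\perp}M^*(\wt Y)=M^*(\wt Y)$ (since $\ran M^*(\wt Y)\subseteq\sD_{X_0^*}\subseteq\cH_V^\perp$) this yields $CP_{\cH_V^\perp}M^*(\wt Y)C=CM^*(\wt Y)CP_{\cH_V^\perp}$, turning the averaged expression into exactly~\eqref{vfh07cda}.
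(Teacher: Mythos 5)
Your proposal is correct and follows essentially the same route as the paper: Crandall's parameterization plus condition \eqref{abe18bb} to identify $\wt K^*$ as an extension of $X_0$, Lemma \ref{kvit20a} for the converse, and the identity \eqref{hvf08aab} to derive \eqref{vfh07abc} and \eqref{vfh07cda}. Your explicit justification that $CP_{\cH_V^\perp}M^*(\wt Y)C=CM^*(\wt Y)CP_{\cH_V^\perp}$ fills in a step the paper leaves implicit, but the overall argument is the same.
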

\begin{proof} (1): If $\wt K=\wt X^*$, where $\wt X\in\bB(\sD_{V^*},\cH_V^\perp)$, is a contractive extension of the contraction $X_0$, then
$C\wt V_{\wt K}^*C$ as well as $\wt V_{\wt K}$ is a contractive extension of $V$.
Actually for $h\in\cH_V$, using \eqref{abe19a} and \eqref{abe17b}, we get
\[
\begin{array}{l}
C\wt V_{\wt K}^*Ch=C(V^*+\wt K^*\cD_{V^*})Ch=CV^*Ch+ C\wt K^*\cD_{V^*}Ch=CP_{\cH_V}CVh+CX_0(\cD_{V^*}Ch)\\[2mm]
=CP_{\cH_V}CVh+ P_{\cH_V^\perp}CVh=C(P_{\cH_V}CVh+ P_{\cH_V^\perp}CVh)=CCVh=Vh.
\end{array}
\]

(2):
Let the operator $\wt W$ be given by \eqref{abe21aa}.
Then $\wt W$ is a contraction, $\wt W\uphar \cH_V=V$, and
\[
C\wt W^*C=\half C\left(\wt V_{\wt K}^*+C\wt V_{\wt K}C\right)C=\half\left(\wt V_{\wt K}+C\wt V^*_{\wt K}C\right)=\wt W.
\]
Thus, $\wt W$ is a $C$-self-adjoint contractive extension of $V$.

If $\wt V$ is a $C$-self-adjoint contractive extensions of $V$, then $C\wt V^*C=\wt V$. Hence we have
$\wt V=\half(\wt V+C\wt V^*C)$. Since $\wt V=\wt V_{\wt K}$ for some $\wt K\in\bB(\cH_V^\perp,\sD_{V^*})$,  by Proposition \ref{abe18aa} and by the
equalities \eqref{abe18bb}, \eqref{abe17b}, the operator $\wt K^*\in\bB(\sD_{V^*},\cH_V^\perp)$ is a contractive extension of  $X_0$.

Since $C\wt V^*_{\wt K}C$ and $\wt W$ are contractive extensions of $V$, by Theorem \ref{abe21ba} the operators $C\wt V^*_{\wt K}C$ and $\wt W$ take the form
\begin{equation}\label{abe22ba}
C\wt V^*_{\wt K}C=VP_{\cH_V}+\cD_{V^*}\wt MP_{\cH_V^\perp},\;
\wt W=VP_{\cH_V}+\cD_{V^*}\wt LP_{\cH_V^\perp},
\end{equation}
where $\wt M,\wt L\in\bB(\cH_V^\perp,\sD_{V^*})$ are contractions and \eqref{abe21aa}, \eqref{abe22ba} give
\[
\cD_{V^*}\wt LP_{\cH_V^\perp}=\half (\cD_{V^*}\wt KP_{\cH_V^\perp}+\cD_{V^*}\wt MP_{\cH_V^\perp}).
\]
Hence
\begin{equation}\label{abe22a}
\wt L=\half (\wt K+\wt M)\in\bB(\cH_V^\perp,\sD_{V^*})\Longleftrightarrow \wt L^*=\half (\wt K^*+\wt M^*)\in\bB(\sD_{V^*},\cH_V^\perp).
\end{equation}
Because $\wt W$ is a C-self-adjoint contractive extension of $V$, by Proposition \ref{abe18aa} and by the equalities \eqref{abe18bb} and \eqref{abe17b}, the
operator $\wt L^*\in\bB(\sD_{V^*},\cH_V^\perp)$ is a contractive extension of  $X_0$.
Let
\[
\wt K=\wt X^*=X_0^* + \wt Y^*\cD_{X^*_0},
\]
where $\wt Y\in\bB(\cH_V^\perp,\cL^\perp_0)$ is a contraction.
 Since $\wt K^*$ is a contractive extension of $X_0$,
we obtain from \eqref{abe22a} that $\wt M^*$ is a contractive extension of $X_0$.

Then, using  \eqref{hvf06a} and \eqref{hvf08aab} we compute
\[
\begin{array}{l}
\wt V_{\wt K}=VP_{\cH_V}+D_{V^*}\wt K P_{\cH_V^\perp}=VP_{\cH_V}+\cD_{V^*}(X_0^*+\wt Y^*\cD_{X^*_0})P_{\cH_V^\perp}\\[2mm]
=VP_{\cH_V}+\cD_{V^*}X_0^*P_{\cH_V^\perp}+\cD_{V^*}\wt Y^*\cD_{X^*_0}P_{\cH_V^\perp}\\
=VP_{\cH_V}+CV^*CP_{\cH_V^\perp}+CP_{\cH_V^\perp}C\cD_{V^*}X^*_0P_{\cH_V^\perp}+\cD_{V^*}\wt Y^*\cD_{X^*_0}P_{\cH_V^\perp},\\
C\wt V_{\wt K}^*C=VP_{\cH_V}+C(V^*+\wt K^*\cD_{V^*})CP_{\cH_V^\perp}\\[2mm]
=VP_{\cH_V}+CV^*CP_{\cH_V^\perp}+ C(\wt X_0 + \cD_{X^*_0}\wt YP_{\cL_0^\perp})\cD_{V^*}CP_{\cH_V^\perp}. 
\end{array}
\]
Thus
\begin{equation}\label{reb20a}
\left\{\begin{array}{l}
\wt V_{\wt K}=VP_{\cH_V}+CV^*CP_{\cH_V^\perp}+CP_{\cH_V^\perp}C\cD_{V^*}X^*_0P_{\cH_V^\perp}+\cD_{V^*}\wt Y^*\cD_{X^*_0}P_{\cH_V^\perp},\\[2mm]
C\wt V_{\wt K}^*C=VP_{\cH_V}+CV^*CP_{\cH_V^\perp}+ C(\wt X_0 P_{\cL_0}+ \cD_{X^*_0}\wt YP_{\cL_0^\perp})\cD_{V^*}CP_{\cH_V^\perp}. 
\end{array}\right.
\end{equation}
In particular, taking  \eqref{vfh07a} into account, we derive
\[
\begin{array}{l}
V_{\wt X_0^*}=VP_{\cH_V}+\cD_{V^*}X_0^*P_{\cH_V^\perp}=VP_{\cH_V}+CV^*CP_{\cH_V^\perp}+CP_{\cH_V^\perp}C\cD_{V^*}X^*_0P_{\cH_V^\perp},\\
CV_{\wt X_0^*}C=VP_{\cH_V}+\cD_{V^*}X_0^*P_{\cH_V^\perp}+\left( C\wt X_0\cD_{V^*}C-CP_{\cH_V^\perp}CD_{V^*}X_0^*  \right)P_{\cH_V^\perp}\\
=VP_{\cH_V}+CV^*CP_{\cH_V^\perp}+ C\wt X_0 \cD_{V^*}CP_{\cH_V^\perp},
\end{array}
\]
and for the $C$-self-adjoint contractive extension $\wt W_0$ defined by \eqref{vfh07aad} we get the expression \eqref{vfh07abc}.

Further, from \eqref{hvf08aab} and \eqref{reb20a} we derive
\[
\begin{array}{l}
\wt W=\half\left(\wt V_{\wt K}+C\wt V^*_{\wt K}C\right)=VP_{\cH_V}+\half\left(CV^*C+\cD_{V^*}X_0^*+CX_0 P_{\cL_0}\cD_{V^*}C  \right)P_{\cH_V^\perp}\\[2mm]
+\half\left(\cD_{V^*}\wt Y^*\cD_{X^*_0}+ C\cD_{X^*_0}\wt YP_{\cL_0^\perp}\cD_{V^*}C \right) P_{\cH_V^\perp}\\[2mm]
=\half\left(\wt V_{X_0^*}+C\wt V_{X^*_0}^*C\right)+\half\left(\cD_{V^*}\wt Y^*\cD_{X^*_0}+ C\cD_{X^*_0}\wt YP_{\cL_0^\perp}\cD_{V^*}C \right)
P_{\cH_V^\perp}.
\end{array}
\]
If
\[
M(\wt Y):=\cD_{V^*}\wt Y\cD_{X^*_0},\;\; \wt Y\in\bB(\sD_{X^*_0}, \cD^{-1}_{V^*}(C\cH_V^\perp))\quad\mbox{is a contraction},
\]
then
\[
\wt W=\wt W_0+\half\left(M(\wt Y)+CM^*(\wt Y)C\right)P_{\cH_V^\perp}.
\]
Thus, \eqref{vfh07cda} holds.
\end{proof}

In the special case of codimension one we have the following stronger result.
\begin{corollary}
Retain the assumptions and the notation of Theorem \ref{abe17a}. If $\codim\cH_V=1$, then for each contractive extension
$\wt X\in\bB(\sD_{V^*},\cH_V^\perp)$ of the operator $X_0$ the contractive extension $\wt V_{\wt X^*}$ of the operator $V$ is $C$-self-adjoint;
\end{corollary}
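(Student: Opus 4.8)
The plan is to verify directly the two conditions of Proposition \ref{abe18aa} for the contraction $\wt K:=\wt X^*\in\bB(\cH_V^\perp,\sD_{V^*})$, since by that proposition $\wt V_{\wt K}=\wt V_{\wt X^*}$ is $C$-self-adjoint precisely when \eqref{abe18bb} and \eqref{abe18bc} hold. Note first that $\wt K^*=\wt X$, so condition \eqref{abe18bb} reads $\wt X\cD_{V^*}Ch=P_{\cH_V^\perp}CVh$ for $h\in\cH_V$. This is immediate and holds for \emph{every} extension $\wt X$ of $X_0$, regardless of the codimension: for $h\in\cH_V$ the vector $\cD_{V^*}Ch$ lies in $\cL_0=\overline{\cD_{V^*}C\cH_V}=\dom X_0$, so $\wt X\cD_{V^*}Ch=X_0\cD_{V^*}Ch$, and the right-hand side equals $P_{\cH_V^\perp}CVh$ by the very definition \eqref{abe17b} of $X_0$. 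Thus only \eqref{abe18bc} remains to be checked, and this is where the codimension hypothesis enters.

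Since $\codim\cH_V=1$, I may write $\cH_V^\perp=\{\lambda e_0:\lambda\in\dC\}$ with $\|e_0\|=1$. Both sides of \eqref{abe18bc} are anti-linear in $f\in\cH_V^\perp$ and take values in $\cH_V^\perp=\dC e_0$, so it suffices to prove the scalar identity obtained by taking $f=e_0$ and pairing with $e_0$, namely
\[
(C\cD_{V^*}\wt K e_0,e_0)=(\wt K^*\cD_{V^*}Ce_0,e_0).
\]
Here I would mimic the rank-one computation used in the Corollary following Proposition \ref{vfh1aa}. Applying the symmetry relation \eqref{jcyjdf} to the pair $\cD_{V^*}\wt K e_0,\,e_0$ gives $(C\cD_{V^*}\wt K e_0,e_0)=(Ce_0,\cD_{V^*}\wt K e_0)$; then moving the self-adjoint operator $\cD_{V^*}=(I-VV^*)^\half$ onto the first argument and recognising the adjoint of $\wt K$ yields
\[
(Ce_0,\cD_{V^*}\wt K e_0)=(\cD_{V^*}Ce_0,\wt K e_0)=(\wt K^*\cD_{V^*}Ce_0,e_0),
\]
which is exactly the required identity. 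Hence $P_{\cH_V^\perp}C\cD_{V^*}\wt K e_0=\wt K^*\cD_{V^*}Ce_0$ and, by anti-linearity, \eqref{abe18bc} holds for all $f\in\cH_V^\perp$.

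With both \eqref{abe18bb} and \eqref{abe18bc} established, Proposition \ref{abe18aa} gives that $\wt V_{\wt X^*}$ is a $C$-self-adjoint contractive extension of $V$. I do not expect a genuine obstacle here: the content of the statement is that in codimension one the constraint \eqref{abe18bc}, which is a real restriction on $\wt K$ for higher codimension, is satisfied automatically. The only point requiring care is the bookkeeping of anti-linearity and the correct use of \eqref{jcyjdf}, that is, ensuring the conjugation symmetry is applied to the right pair of vectors and that the self-adjointness of $\cD_{V^*}$ is invoked, so that the projection $P_{\cH_V^\perp}$ may be dropped once the target is known to lie in the one-dimensional space $\dC e_0$.
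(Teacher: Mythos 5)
Your proof is correct and follows essentially the same route as the paper: the key step in both is the rank-one computation $(C\cD_{V^*}\wt K e_0,e_0)=(Ce_0,\cD_{V^*}\wt Ke_0)=(\wt K^*\cD_{V^*}Ce_0,e_0)$ via \eqref{jcyjdf} and the self-adjointness of $\cD_{V^*}$, which shows that \eqref{abe18bc} is automatic when $\dim\cH_V^\perp=1$. Your explicit checks that \eqref{abe18bb} holds because $\wt X$ extends $X_0$ and that anti-linearity reduces \eqref{abe18bc} to the single vector $e_0$ are details the paper leaves implicit, but they do not change the argument.
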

\begin{proof}
We verify  the equality \eqref{abe18bc} for $\wt K=\wt X^*$. Since $\codim\cH_V=1$ by assumption,  $\cH_V^\perp=\dC e_0$, where $e_0\in \cH_V^\perp$, $||e_0||=1.$  Then we
compute
\[
\begin{array}{l}
\wt K^* \cD_{V^*}Ce_0=(\wt K^* \cD_{V^*}Ce_0,e_0)e_0
=(Ce_0,\cD_{V^*}\wt Ke_0)e_0=(C\cD_{V^*}\wt Ke_0, e_0)e_0\\
=(P_{\cH_V^\perp} C\cD_{V^*}\wt Ke_0,e_0)e_0=P_{\cH_V^\perp} C\cD_{V^*}\wt Ke_0.
\end{array},
\]
This implies that \eqref{abe18bc} holds. By Proposition \eqref{abe18aa}, the operator $\wt V_{\wt X^*}$ is a $C$-self-adjoint contractive extension of $V.$
\end{proof}

\begin{remark}\label{hvf17lm}
Since $\wt W_0$ is a contractive extension of $V$, it has the form
\[
\wt W_0=VP_{\cH_V}+\cD_{V^*}\wt K_0P_{\cH_V^\perp}
\]
and $\wt K^*_0\in\bB(\sD_{V^*},\cH_V^\perp)$ is the contractive extension of $X_0$. By \eqref{hvf06a},
\[
\wt K^*_0=X_0P_{\cL_0}+ \cD_{X^*_0}\wt Y_0P_{\cL_0^\perp},\; \quad\mbox{where} \quad\wt Y_0\in \bB(\cL_0^\perp,\sD_{X^*_0})\quad\mbox{is a contraction}.
\]
Since
\[
\wt K_0=X^*_0+\wt Y^*_0\cD_{X^*_0},
\]
it follows from \eqref{vfh07abc} and from the equality $CP_{\cH_V^\perp}C=P_{C\cH_V^\perp}$ (see \eqref{hvf19a})  that
\[
\cD_{V^*}\wt Y^*_0\cD_{X^*_0}f=
\half\left(C\wt X_0\cD_{V^*}C-P_{C\cH_V^\perp}\cD_{V^*}X_0^*  \right)f\;\;\forall f\in\cH_V^\perp.
\]
In particular
\begin{equation}\label{kbg08bb}
f\in\ker\cD_{X^*_0}\Longrightarrow \wt X_0\cD_{V^*}Cf=P_{\cH_V^\perp}C\cD_{V^*}X^*_{0}f.
\end{equation}
\end{remark}

\begin{remark}\label{cherv08a}
Let $\{V,U\}$ be an adjoint pair of non-densely defined contractions. Recall that a contraction $\wt V$ is called a contractive extension of $\{V,U\}$ if $\wt V\supset V$ and $\wt V^*\supset U$.

In the case $\{V,V\}$ ($\Longleftrightarrow$ $V$ is a symmetric contraction) a description of all self-adjoint contractive extensions was obtained by Kre\u{\i}n in \cite{Kr} in terms of an operator interval (see also \cite{KO1977}). The case of all contractive extensions of $\{V,V\}$ has been considered in \cite{ArlTsek1986}.
The existence of a contractive extension for an adjoint pair of non-densely defined contractions (the Cayley transform of an adjoint pair of densely defined accretive operators)
is proved in \cite[Chapter IV, Proposition 4.2]{SF}.

The set of all contractive extensions of an arbitrary adjoint pair of contractions forms an operator ball
\begin{equation}\label{cherv07a}
\wt V_Y=\wt V_0+\cR_l Y\cR_r,
\end{equation}
where $\wt V_0$ is the center and $\cR_l$ and $\cR_r$ are the left and right radii of the operator ball and $Y\in\bB(\cran \cR_r,\cran\cR_l)$ is an arbitrary contraction.
Parametrizations of the ball \eqref{cherv07a} by means of $2\times 2$ block-operator matrices can be found in \cite{AG1982, DKW,FoFra1984,ShYa}.
In other terms the center $\wt V_0$ and the radii $\cR_l$ and $\cR_r$ are described in \cite{Arl1985}. From \eqref{cherv07a} it follows that an adjoint pair of contractions has a unique contractive extension if and only if one of the radii is equal to zero.

Let $V$ be a $C$-symmetric  contraction. Then $\{V,CVC\}$ is the adjoint pair of $C$-symmetric non-densely defined contractions. Moreover, $\wt V$ is a $C$-self-adjoint contractive extension of a $C$-symmetric contraction $V$ if and only if
 $\wt U:=\wt V^*$  is a $C$-self-adjoint extension of $CVC$.
 For the adjoint pair $\{V,CVC\}$ it follows from the results in \cite{Arl1985}  that:
\begin{enumerate}
\item $\wt V_0=\wt V_{X^*_0},$ where the operator $X_0$ is defined in \eqref{abe17b},
\item the operators $\cR^2_l$ and $\cR^2_r$ are the Kre\u{\i}n \textit{shorted operators} \cite{Kr}, \cite{And}:
\[
\begin{array}{l}
\cR^2_l=(\cD^2_{V^*})_{\Omega_l}:=\cD_{V^*}P_{\Omega_l}\cD_{V^*},\\[2mm]
 \cR^2_r=(C\cD^2_{V^*}C)_{\Omega_r}:=C\cD_{V^*}C
 P_{\Omega_r}C\cD_{V^*}C=C\cD_{V^*}P_{\Omega_l}\cD_{V^*}C=C\cR^2_l C,
\end{array}
\]
where the subspaces $\Omega_l$ and $\Omega_r$ are defined as follows (cf. \eqref{cherv07b}):
\[ 
\Omega_l=\{h: \cD_{V^*}h\in CH_V^\perp\},\; \Omega_r=\{g: \cD_{V^*}Cg\in CH_V^\perp\} =C\Omega_l.
\] 
\end{enumerate}
\end{remark}

If $\cL_0^\perp\ne\{0\}\Longleftrightarrow \ran \cD_{V^*}\cap C\cH_V^\perp
\ne\{0\}$, then \eqref{vfh07cda} shows that the $C$-symmetric non-densely defined contraction $V$ has infinitely many $C$-self-adjoint contractive
extensions.

The next theorem deals with the case when there is a  unique extension.
\begin{theorem}\label{kkvit23ab}
If
\begin{equation}\label{abe23b}
 \ran \cD_{V^*}\cap C\cH_V^\perp=\{0\},
\end{equation}
then
\begin{enumerate}
\item[(a)] $\wt W_0=\wt V_{X^*_0}$,
\item[(b)] $\cL_0=\sD_{V^*}$ and $X_0$ is a co-isometry,
\item[(c)] the equality
\begin{equation}\label{vfh07bb}
P_{\cH_V^\perp}C\cD_{V^*}X_0^*f=\wt X_0\cD_{V^*}Cf\;\;\forall f\in\cH_V^\perp
\end{equation}
holds,
\item[(d)]
$\wt W_0$ is the unique $C$-self-adjoint contractive extension of $V$
and, moreover, $\wt W_0$ is the unique contractive extension of the adjoint pair $\{V,CVC\}$ of contractions.

\end{enumerate}
\end{theorem}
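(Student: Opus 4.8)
The plan is to read off all four assertions from the geometry of the operator ball of contractive extensions of the adjoint pair $\{V,CVC\}$ described in Remark \ref{cherv08a}, together with the parameterizations of Theorem \ref{abe17a}.

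First I would dispose of the easy half of (b). By the definition \eqref{cherv07b} we have $\cL_0^\perp=\{y\in\sD_{V^*}:\cD_{V^*}y\in C\cH_V^\perp\}$, and the equivalence displayed just after \eqref{cherv07b} reads $\cL_0^\perp=\{0\}\Longleftrightarrow\ran\cD_{V^*}\cap C\cH_V^\perp=\{0\}$. Thus the hypothesis \eqref{abe23b} is exactly $\cL_0^\perp=\{0\}$, i.e.\ $\cL_0=\sD_{V^*}$, which also forces $\wt X_0=X_0P_{\cL_0}=X_0$.

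For (a) and (d) I would exploit Remark \ref{cherv08a}. Since $\Omega_l\cap\sD_{V^*}=\cL_0^\perp$ and $P_{\Omega_l}\cD_{V^*}=P_{\cL_0^\perp}\cD_{V^*}$, the left radius satisfies $\cR^2_l=\cD_{V^*}P_{\cL_0^\perp}\cD_{V^*}$; under \eqref{abe23b} this is $0$, hence $\cR_l=0$, and then $\cR^2_r=C\cR^2_lC=0$. Therefore the ball \eqref{cherv07a} of all contractive extensions of $\{V,CVC\}$ degenerates to its center $\wt V_0=\wt V_{X^*_0}$. Now every $C$-self-adjoint contractive extension of $V$ is in particular a contractive extension of $\{V,CVC\}$ (if $\wt V=C\wt V^*C\supseteq V$ then $\wt V^*=C\wt VC\supseteq CVC$), and the $C$-self-adjoint contraction $\wt W_0$ of \eqref{vfh07aad} is such an extension by Theorem \ref{abe17a}. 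Hence the singleton conclusion yields $\wt W_0=\wt V_{X^*_0}$, which is (a), and that this is simultaneously the unique $C$-self-adjoint contractive extension of $V$ and the unique contractive extension of the pair, which is (d).

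For (c) I would use that, by (a), $\wt V_{X^*_0}=\wt W_0$ is $C$-self-adjoint, and apply Proposition \ref{abe18aa} to $\wt K:=X^*_0$ (so that $\wt K^*=\wt X_0=X_0$): condition \eqref{abe18bb} holds by the definition \eqref{abe17b} of $X_0$, and condition \eqref{abe18bc} becomes precisely \eqref{vfh07bb}. Equivalently, (c) is just the vanishing of the correction term in \eqref{vfh07abc}, so (a) and (c) are two faces of the same fact.

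The co-isometry statement in (b) is the genuinely delicate point, and I expect it to be the main obstacle. By Crandall's description \eqref{hvf06a}, contractive extensions of $X_0$ to $\sD_{V^*}$ — equivalently contractive extensions of $\{V,CVC\}$ — are parameterized by contractions $\wt Y\in\bB(\cL_0^\perp,\sD_{X^*_0})$, while Remark \ref{cherv08a} parameterizes the same ball by $Y\in\bB(\cran\cR_r,\cran\cR_l)$. Matching these two descriptions and using $\cran\cR_r=C\cran\cR_l$ (so the two radius ranges are anti-isometric, hence equidimensional) forces $\dim\sD_{X^*_0}=\dim\cL_0^\perp$, and then \eqref{abe23b} gives $\sD_{X^*_0}=\{0\}$, i.e.\ $X_0X^*_0=I_{\cH_V^\perp}$. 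The reason this step is hard is that mere uniqueness of the extension only tells us that one of $\cL_0^\perp,\sD_{X^*_0}$ is trivial; I expect the $C$-self-adjoint defect identities (e.g.\ \eqref{abe23a} together with \eqref{abe23aa}) to collapse to tautologies on $\cH_V^\perp$, so the needed equality $\dim\sD_{X^*_0}=\dim\cL_0^\perp$ has to be extracted either from the conjugation symmetry $\cR^2_r=C\cR^2_lC$ of the radii, or, more concretely, by proving the pointwise identity $\|\cD_{X^*_0}g\|^2=\|P_{\cL_0^\perp}\cD_{V^*}Cg\|^2$ for $g\in\cH_V^\perp$ (which under \eqref{abe23b} has vanishing right-hand side); establishing that identity from the defining relation \eqref{abe17b} is the technical heart of the proof.
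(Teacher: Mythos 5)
Your handling of (a), (c), (d) and of the identity $\cL_0=\sD_{V^*}$ is essentially sound, though for (a) and (d) you take a different route from the paper: you collapse the operator ball of Remark \ref{cherv08a} by computing $\cR_l^2=\cD_{V^*}P_{\cL_0^\perp}\cD_{V^*}=0$ under \eqref{abe23b}, and then use that every $C$-self-adjoint contractive extension of $V$ is a contractive extension of the pair $\{V,CVC\}$. The paper instead argues self-containedly: since $\cL_0=\sD_{V^*}$, the operator $X_0$ admits no contractive extension on $\sD_{V^*}$ other than itself, so all the parameters $\wt K,\wt L,\wt M$ from the proof of Theorem \ref{abe17a} coincide with $X_0^*$, forcing $\wt V_{X_0^*}=C\wt V_{X_0^*}^*C=\wt W_0$; the ball argument appears only as the supplementary Remark \ref{cherv10aa}. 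Your route is legitimate but leans on the unproved description of $\wt V_0,\cR_l,\cR_r$ imported from \cite{Arl1985}.

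The real problem is the co-isometry assertion in (b), which you explicitly leave open, and neither of your suggested routes closes it. The parameter-matching route fails: under \eqref{abe23b} the Crandall parameter set of contractions in $\bB(\cL_0^\perp,\sD_{X_0^*})$ is a singleton simply because $\cL_0^\perp=\{0\}$, irrespective of $\sD_{X_0^*}$, so a set-theoretic bijection with the ball of contractions in $\bB(\cran\cR_r,\cran\cR_l)$ is a bijection of singletons and yields no dimension count; there is no reason two parameterizations of the same (here one-point) set should equate the dimensions of the parameter spaces. Your second route is a conjectured pointwise identity that you do not establish. The paper's actual argument is short: by \eqref{abe23aa}, $\|\cD_{\wt V_{X_0^*}}(f-h)\|=\|\cD_{\wt V_{X_0^*}^*}(Cf-Ch)\|$ for $h\in\cH_V$; the second identity in \eqref{abe23a} gives $\|\cD_{\wt V_{X_0^*}^*}(Cf-Ch)\|=\|\cD_{\wt X_0}(\cD_{V^*}Cf-\cD_{V^*}Ch)\|$, whose infimum over $h\in\cH_V$ vanishes because \eqref{abe23b} is equivalent (Kre\u{\i}n; cf.\ Remark \ref{hvf24a}) to $\inf_{h}\|\cD_{V^*}g-\cD_{V^*}Ch\|=0$ for all $g\in\cH$; on the other hand the first identity in \eqref{abe23a}, combined with $V^*\sD_{V^*}\subseteq\sD_V$, evaluates $\inf_{h\in\cH_V}\|\cD_{\wt V_{X_0^*}}(f-h)\|^2=\|\cD_{X_0^*}P_{\cH_V^\perp}f\|^2$. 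Hence $\cD_{X_0^*}$ vanishes on $\cH_V^\perp$, i.e.\ $X_0^*$ is an isometry. Without an argument of this kind your proof of (b) is incomplete.
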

\begin{proof}
Assume that $\ran \cD_{V^*}\cap C\cH_V^\perp=\{0\}$. Then, due to the definition \eqref{cherv07b}, we have $\cL_0^\perp=\{0\}$, i.e., $\overline{\cD_{V^*}C\cH_V}=\sD_{V^*}$ and the operator $\wt
K:=X^*_0\in\bB(\cH_V^\perp,\sD_{V^*})$ is the unique contraction  satisfying condition \eqref{abe18bb}. Therefore the operator
$\wt W_0$ is the unique $C$-self-adjoint contractive extension of $V$. Moreover, due to \eqref{abe22ba} and because $\wt K^*$, $\wt L^*$ and $\wt M^*$ are
contractive extensions of $X_0$, we obtain the equalities $\wt K^*=\wt L^*=\wt M^*=X_0$. Thus
$\wt V_{X^*_0}=C\wt V^*_{ X_0^*}C=\wt W_0,$ i.e., $\wt W_0$ is the unique $C$-self-adjoint contractive extension of $V$.

By Proposition \ref{abe18aa}, the equality \eqref{abe18bb} for $\wt K=X^*_0$ holds, i.e.,  \eqref{vfh07bb} is valid.

Next we prove that under the condition \eqref{abe23b} the operator $X_0\in\bB(\sD_{V^*},\cH_V^\perp)$ is a co-isometry, i.e.,
$X_0^*\in\bB(\cH_V^\perp,\sD_{V^*})$ is an isometry.
Since $\wt V_{X_0^*}$ is a $C$-self-adjoint contraction, the equality \eqref{abe23aa} holds. Hence
$$||\cD_{\wt V^*_{X^*_0}}Cf||^2=||\cD_{\wt V_{X^*_0}}f||^2\;\;\forall f\in\cH. $$
Consequently,
\[
\inf\limits_{h\in\cH_V}||\cD_{\wt V^*_{X^*_0}}Cf-\cD_{\wt V^*_{X^*_0}}Ch||^2=
\inf\limits_{h\in\cH_V}||\cD_{\wt V_{X^*_0}}f-\cD_{\wt V_{X^*_0}}h||^2\;\;\forall f\in\cH.
\]
Condition \eqref{abe23b} is equivalent to the equality (see \cite{Kr})
\[
\inf\limits_{h\in\cH_V}||\cD_{V^*}g-\cD_{V^*}Ch||^2=0\;\;\forall g\in\cH.
\]
From the second equality in \eqref{abe23a} we obtain
\[
||\cD_{\wt V^*_{X^*_0}}g-\cD_{V^*_{X^*_0}}Ch||^2=||\cD_{\wt X^*_0}(\cD_{V^*}g-\cD_{V^*}Ch)||^2.
\]
Therefore,
\[
\inf\limits_{h\in\cH_V}||\cD_{\wt V^*_{X^*_0}}Cf-\cD_{\wt V^*_{X^*_0}}Ch||^2=0\;\;\forall f\in\cH.
\]
Since $Y^*\sD_{Y^*}\subseteq\sD_{Y}$ for an arbitrary contraction $Y$ \cite{SF}, from \eqref{abe23a} we get
\[
\inf\limits_{h\in\cH_V}||\cD_{\wt V_{X^*_0}}f-\cD_{\wt V_{X^*_0}}h||^2=||\cD_{X^*_0}P_{\cH_V^\perp}f||^2\;\;\forall f\in\cH,
\]
Hence we conclude that $\cD_{X^*_0}g=0$ for all $g\in\cH_V^\perp$. Thus, $X_0^*$ is an isometry.

Let us prove that $\wt W_0=V_{X^*_0}$ is the unique contractive extension of the adjoint pair of contractions $\{V,CVC\}$.
Suppose that $\wt V$  is a contractive extension of the adjoint pair $\{V,CVC\}$. Since $\wt V\supset V$ and $\wt V^*\supset CVC$,
we get that $C\wt V^*C\supset V$.  Therefore $\wt W:=\half(\wt V+C\wt V^*C)$ is a $C$-self-adjoint contractive extension of $C$-symmetric contraction $V.$  Hence, $\wt W=\wt W_0=\wt V_{X^*_0}$. By Crandall's Theorem \ref{abe21ba} the operator $\wt V$ is of the form $\wt V=\wt V_{\wt K}=VP_{\cH_V}+\cD_{V^*}\wt KP_{\cH_V^\perp}$. Theorem \ref{abe17a} says that $\wt K^*\in \bB(\sD_{V^*}, \cH_V^\perp)$ is a contractive extension of the operator $X_0$. Because $\cL_0=\sD_{V^*}$ and $X_0^*$ is an isometry, the equality
$\wt K=X^*_0$ holds. Thus $\wt V=\wt V_{\wt K}=\wt V_{X^*_0}$. As proved above, $\wt V_{X^*_0}$ is a $C$-self-adjoint contractive extension of $V.$
Thus, the adjoint pair $\{V,CVC\}$ admits a unique contractive extension.

Now the proof of the theorem is complete.
\end{proof}
\begin{remark}\label{cherv10aa}
Remark \ref{cherv08a} and the equality $\ran \cD_{V^*}\cap C\cH_V^\perp=\{0\}$ (see \eqref{abe23b}) yield that $\cR_l=\cR_r=0$. Hence $\wt V_{X^*_0}$ is the unique contractive extension of the adjoint pair $\{V,CVC\}$.
\end{remark}

\begin{remark}\label{hvf24a}
Let $B$ be a bounded nonnegative self-adjoint operator in the Hilbert space $\sH$ and let $\sN$ be a subspace in $\sH$. Then it is known (see e.g.
\cite{Ando1970, Kr, KO1977, Schmudgen}) that the following statements are equivalent:
\begin{enumerate}
\def\labelenumi{\rm (\roman{enumi})}
\item $\ran B^\half\cap\sN=\{0\}$;
\item $\inf\limits_{\f\in\sN^\perp}(B(h-\f),h-\f)=0$ $\forall h\in\sN\setminus\{0\}$;
\item $\max\left\{Z=Z^*\in\bB(\sH): 0\le Z\le B,\;\ran Z\subseteq\sN\right\}=0$;
\item $\sup\limits_{f\in\sH}\cfrac{|(f,h)|^2}{(Bf,f)}=0$ $\forall h\in\sN\setminus\{0\}$ (here the convention $0/0=0$ is used);
\item $\lim\limits_{x\uparrow 0}((B-xI)^{-1} h,h)=+\infty$ $\forall h\in\sN\setminus\{0\}$.
\end{enumerate}
\end{remark}

\section{$C$-self-adjoint unitary extensions of a $C$-symmetric non-densely defined isometric operator}
In this section we consider a particular case of a $C$-symmetric non-densely defined contraction.

Suppose that $V$ is a $C$-symmetric closed non-densely defined isometric operator in the Hilbert space $\cH$, $\dom V=\cH_V$.  By Theorem \ref{abe17a} the operator $V$ admits $C$-self-adjoint contractive extensions.
We are interested in $C$-self-adjoint {\it unitary} extensions of $V$. As it has been mentioned in Remark \ref{cherv08a}, $C$-self adjoint unitary extensions
are unitary extensions of the adjoint pair of isometric operators $\{V, CVC\}$.
Note that when $V$ is an isometry, then for the subspace $\sD_{V^*}$ and for the defect operator $\cD_{V^*}$ the equalities
\[
\sD_{V^*}=\ker V^*=\cH\ominus\ran V,\; \cD_{V^*}=P_{\ker V^*},
\]
hold.

At first, we establish some properties of the closure $X_0$ of the operator $\dot{X_0}$  defined in \eqref{abe17b} and of the corresponding  decompositions of the subspaces $\ker V^*$ and $\cH_V^\perp$.
As  noted in Section \ref{kbg08a}, if $V$ is a $C$-symmetric isometry, then the operator $X_0$ is an isometry defined on the subspace
$$\cL_0=\dom X_0=\overline{\cD_{V^*}C\cH_V}=\overline{P_{\ker V^*}C\cH_V}\subseteq\ker V^*
$$
and $\ran X_0=\overline{P_{\cH_V^\perp}C\ran V}\subseteq\cH_V^\perp$.
\begin{proposition}\label{lip07a}
Let $V$ be a $C$-symmetric isometric operator defined on the subspace $\cH_V$. Then the operator $X_0$ is isometric and
\begin{equation}\label{iul09a}
\sD_{X^*_0}=\ker X_0^*= \cH_V^\perp\cap C\ker V^*.
\end{equation}
For the subspace $\cL_0^\perp=\sD_{V^*}\ominus \dom X_0=\ker V^*\ominus\cL_0$ (see \eqref{cherv07b})
we have the equalities
\begin{equation}\label{lip09aa}
\cL_0^\perp=C\cH_V^\perp\cap\ker V^*=C\ker X_0^*.
\end{equation}
Therefore, the orthogonal decompositions
\begin{equation}\label{lip08cc}
\cH_V^\perp=\ran X_0\oplus\ker X^*_0
\end{equation}
and
\begin{equation}\label{lip08ca}
\ker V^*=\dom X_0\oplus C\ker X^*_0
\end{equation}
hold.
\end{proposition}
\begin{proof}
If $f\in \cH_V^\perp\cap \ker X^*_0$, then  $(f, CVh)=0$ for all $h\in\cH_V$. Hence $Cf\in\ker V^*$ and, therefore, \eqref{iul09a} holds.

Clearly,
\[
\f\in\cL_0^\perp\Longleftrightarrow \f\in\ker V^* \;\mbox{and}\; (\f, Ch)=0\;\;\forall h\in\cH_V\Longleftrightarrow \f\in C\cH_V^\perp\cap\ker V^*=C\ker X^*_0.
\]
\end{proof}
Note that from \eqref{lip08cc} and \eqref{lip08ca} it follows that $\codim (\dom V)=\codim(\ran V).$ Therefore, a $C$-symmetric isometric operator admits always unitary extensions. The next theorem describes those unitary extensions which are $C$-self-adjoint.

\begin{theorem}\label{kbg08ab}
{\rm (1)} The formula
\begin{equation}\label{lip09cbac}
\wt V_J=VP_{\cH_V}+(X^*_0+CJP_{\ker X^*_0})P_{\cH_V^\perp}
\end{equation}
gives
a bijective correspondence between all conjugations $J$ in the subspace $\ker X^*_0(=\cH_V^\perp\cap C\ker V^*)$ and all $C$-self-adjoint unitary extensions $\wt V_J$ of the $C$-symmetric isometric operator $V$.

{\rm (2)} If
\begin{equation}\label{jul10aa}
C\cH_V^\perp\cap\ker V^*=\{0\},
\end{equation}
then the unitary operator $\wt V=VP_{\cH_V}+X^*_0P_{\cH_V^\perp}$ is $C$-self-adjoint and  $\wt V$ is the unique contractive extensions of the adjoint pair $\{V,CVC\}$ of isometric operators.
\end{theorem}
\begin{proof}
 Let $J$ be a conjugation in $\ker X^*_0$ and let  $\wt K_J\in\bB(\cH_V^\perp,\sD_{V^*})$ be the operator defined by
\[
\wt K_J=X^*_0+CJP_{\ker X^*_0} 
\]
From \eqref{lip08cc} and since $X^*_0:\ran X_0\to\dom X_0=\cL_0$ is isometry it follows that
\[
||\wt K_J f||^2=||X^*_0P_{\ran X_0}f||^2+||CJP_{\ker X^*_0}f||^2=||f||^2\;\;\forall f\in\cH_V^\perp.
\]
Because the operator $C$ maps $\cL^\perp_0=C\ker X^*_0$ onto $\ker X^*_0$ for $\wt K^*_J\in\bB(\sD_{V^*}, \cH_V^\perp)$ we have
\[
\wt K^*_J=X_0P_{\cL_0}+JC P_{\cL^\perp_0}
\]
and $||\wt K^*_J g||^2=||g||^2$ for all $g\in\sD_{V^*}=\ker V^*$.
Thus, the operator $\wt K_J$  maps $\cH_V^\perp$ unitarily onto $\sD_{V^*}=\ker V^*$ and the equalities in \eqref{abe18a} and \eqref{abe23a} imply that $\wt V_J=\wt V_{\wt K_J}$ is a unitary extension of the operator $V$.

In order to check that $\wt V_J$ is a C-self-adjoint extension we apply Proposition \ref{abe18aa}. Since $\wt K_J^*$ is a contractive extension of the operator $X_0$, the equality \eqref{abe18bb} is fulfilled.

Taking into account that $\ker \cD_{X^*_0}=\ran X_0$ and $\cD_{V^*}=P_{\ker V^*}$, it follows from \eqref{kbg08bb}
that
\[
\wt X_0P_{\ker V^*}Cf=P_{\cH_V^\perp} CP_{\ker V^*}X^*_0f\;\;\forall f\in\ran X_0.
\]
From the orthogonal decomposition \eqref{lip08ca} and the fact that $X_0$ is isometric we obtain that $X^*_0f\in\dom X_0=\cL_0\subset\ker V^*$ for all $f\in\ran X_0$. So
\[
\wt X_0P_{\ker V^*}Cf=P_{\cH_V^\perp} CX^*_0f\;\;\forall f\in\ran X_0.
\]
If $h\in\ker X^*_0$, then because of $Ch\in C\ker X^*_0=\cL^\perp_0\subset\ker V^*$ (see \eqref{lip09aa}, \eqref{lip08ca}) we get
$$JCP_{\cL^\perp_0}P_{\ker V^*}Ch=Jh$$
and
\[
P_{\cH_V^\perp} C CJP_{\ker X^*_0}h=P_{\cH_V^\perp}Jh=Jh.
\]
Thus, since $\cH_V^\perp\ni g=P_{\ran X_0}g+P_{\ker X^*_0}g$,  we finally obtain
\[
\begin{array}{l}
P_{\cH_V^\perp}CP_{\ker V^*}\wt K_Jg=P_{\cH_V^\perp}C(X^*_0+CJP_{\ker X^*_0})g,\\
=(X_0P_{\cL_0}+JCP_{\cL^\perp_0})P_{\ker V^*}Cg=\wt K^*_JP_{\ker V^*}Cg\;\;\forall g\in\cH_V^\perp.
\end{array}
\]
Now we conclude that \eqref{abe18bc} is fulfilled. Therefore, by Proposition \ref{abe18aa}, the operator $\wt V_J$ is a $C$-self-adjoint extension of $V$.

Suppose now that $\wt V_{\wt K}=VP_{\cH_V}+\wt KP_{\cH_V^\perp}$ is a $C$-self-adjoint unitary extension of $V$. Then $\wt K\in\bB(\cH_V^\perp,\ker V^*)$
is a unitary operator and by Proposition \ref{abe18aa} the operator $\wt K^*\in\bB(\ker V^*,\cH_V^\perp)$ is a contractive extension of $X_0$. Moreover,
$\wt K^*$ is a unitary from $\ker V^*$ onto $\cH_V^\perp$. Then
\[
\wt K^*=X_0P_{\cL_0}+ \wt YP_{\cL_0^\perp},
\]
where $\wt Y$ is a unitary operator from $\cL_0^\perp$ onto $\ker X^*_0$ (see  \eqref{lip09aa} and the orthogonal decompositions
\eqref{lip08ca} and \eqref{lip08cc}). Then $\wt K=X_0^*+\wt Y^* P_{\ker X^*_0}$ and  by Proposition \ref{abe18aa} the equality \eqref{abe18bc} holds. Hence
\[
P_{\cH_V^\perp}C(X_0^*+\wt Y^* P_{\ker X^*_0})f=(X_0P_{\cL_0}+ \wt YP_{\cL_0^\perp})P_{\ker V^*} Cf\;\;\forall f\in\cH_V^\perp.
\]
Then, using \eqref{lip08ca} we get the equality
\[
C \wt Y^*h=\wt YCh\;\; \forall h\in\ker X^*_0.
\]
Define $J:=C\wt Y^*$. Then $J$ is anti-unitary in $\ker X^*_0$ and $J^2h=C\wt Y^*C\wt Y^*h=\wt Y\wt Y^*h=h$ for all $h\in\ker X^*_0$ (see also Lemma \ref{lip09ad}). Therefore
 $\wt Y^*=CJ$ and $\wt V_{\wt K}$ is of the form \eqref{lip09cbac}.

This proves the statement (1).

Since $\ran\cD_{V^*}=\ker V^*$, the condition \eqref{abe23b} turns into the condition \eqref{jul10aa}. In this case,
$\cL_0=\ker V^*$, $\ker X_0^*=\{0\}.$
 Consequently, due to Theorem \ref{kkvit23ab},  (2) is valid.

The proof is complete.
\end{proof}

\begin{remark} \label{luj10aa}
If\, $\ker X^*_0(=\cH_V^\perp\cap C\ker V^*)\ne\{0\}$, then for each conjugation $J$ in $\ker X^*_0$
the $C$-self-adjoint unitary operator $\wt V_J$ of the form \eqref{lip09cbac} is an extension of the $C$-symmetric isometric operator
\[
\dom \wh V_0=\cH_V\oplus\ran X_0,\; \wh V_0(h+g):=V_0h+X^*_0g\;\;\forall h\in\cH_V,\;\forall g\in\ran X_0.
\]
If $CVh=V^*Ch$ for all $h\in\cH_V$, then by Proposition \ref{bkm29a} $\dom X_0=\{0\}$, and, therefore $\wh V_0=V$.
\end{remark}

\section{Dissipative $C$-symmetric operators and their $C$-self-adjoint maximal dissipative extensions}
First we develop some basics on the Cayley transform of a dissipative operator.

\subsection{Dissipative operators and their Cayley transforms}
Recall that
a linear operator $T$ on a Hilbert space $\cH$ is called \textit{dissipative} if the quadratic form
\[
 \gamma_T[f]:=\IM (Tf,f),\; \dom\gamma=\dom T,
\]
is nonnegative.
The operator $T$ is called \textit{maximal dissipative} if it is dissipative and has no dissipative extensions on $\cH$. It is well known  (see e.g.
\cite{Glazman3, Ka,Phillips1959, Straus1968}) that
a closed densely defined operator $T$ is maximal dissipative if and only if $-T^*$ is maximal dissipative ($T^*$ is maximal accumulative).
 If $T$ is a maximal dissipative (\textit{accumulative}) operator, then  the resolvent set of $T$ contains the open lower half-plane $\dC_-$ and the
 resolvent satisfies
$
||(T-z I)^{-1}||\le |\IM z|^{-1}\;\;\forall z,\;\;\IM z<0.
$

The Cayley transform $V_\lambda$ of a dissipative operator $T$ is defined as follows:
\begin{equation}\label{rtkbnh}
\left\{\begin{array}{l} \psi=(T-\bar \lambda I )f\\
V_\lambda \psi=(T-\lambda I)f\end{array}\right. f\in\dom T,\; \IM \lambda>0.
\end{equation}
Then $\dom V_\lambda=\ran (T-\bar\lambda I)$, $\ker (I-V_\lambda)=\{0\}$.
The operator $V_\lambda$ is a contraction. Further, $\dom V_\lambda$ is a closed subspace if and only if the operator $T$ is closed, and $\dom V_\lambda=\cH$
if and only if the operator $T$ is maximal dissipative.

The inverse transform is of the form:
\[
\begin{array}{l}
\left\{\begin{array}{l}f=(I-V_\lambda)h\\
Tf=(\lambda I-\bar\lambda V_\lambda)h\end{array}\right.,\; h\in\dom V_\lambda.
\end{array}
\]
If $T$ is a maximal dissipative operator, then
\[
V_\lambda^*=(T^*-\bar\lambda I)(T^*-\lambda I)^{-1}.
\]
Let $T$ be a closed dissipative operator. Set
\[
\sM_{\bar\lambda}:=\ran (T-\bar\lambda I),\; \;\sN_\lambda=\cH\ominus\sM_{\bar\lambda}.
\]
Then $\sM_{\bar\lambda}$ is a closed subspace of $\cH$ for any $\lambda\in\dC_+$. If $\dom T$ is dense in $\cH$, then $\sN_\lambda=\ker(T^*-\lambda I).$
\begin{proposition}\label{vjtyjdp}
Let $T$ be a maximal dissipative operator and let $V_\lambda =(T-\lambda I)(T-\bar\lambda I)^{-1}$, $\lambda\in\dC_+,$ be its Cayley transform  (see
\eqref{rtkbnh}).
 Then
\[
\begin{array}{l}
||\cD_{\cV_\lambda}\psi||^2
=4~ (\IM \lambda)\,\IM (T(T-\bar\lambda I)^{-1}\psi,(T-\bar\lambda I)^{-1}\psi)
    \quad\forall \psi\in\cH,      \\[2mm]
||\cD_{\cV^*_\lambda}\f||^2
=-4~ (\IM \lambda)\,\IM (T^*(T^*-\lambda I)^{-1}\f,(
T^*-\lambda I)^{-1}\f)
   \quad\forall \f\in\cH.
\end{array}
\]
\end{proposition}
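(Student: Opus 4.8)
The plan is to prove both identities by the same elementary device: express the defect through $\|\cD_{V_\lambda}\psi\|^2=\|\psi\|^2-\|V_\lambda\psi\|^2$ and translate everything back through the inverse Cayley transform. Since $T$ is maximal dissipative, the open lower half-plane $\dC_-$ lies in its resolvent set, so $(T-\bar\lambda I)^{-1}\in\bB(\cH)$ and every $\psi\in\cH$ can be written as $\psi=(T-\bar\lambda I)f$ with $f:=(T-\bar\lambda I)^{-1}\psi\in\dom T$. By the defining relation \eqref{rtkbnh} of $V_\lambda$ we then have $V_\lambda\psi=(T-\lambda I)f$, whence $\|\cD_{V_\lambda}\psi\|^2=\|(T-\bar\lambda I)f\|^2-\|(T-\lambda I)f\|^2$, and the task reduces to a single sesquilinear expansion.

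Next I would expand both squared norms. Writing $g:=Tf$, the diagonal terms $\|g\|^2+|\lambda|^2\|f\|^2$ are common to both and cancel in the difference, while the cross terms combine to $(\lambda-\bar\lambda)\big[(f,g)-(g,f)\big]$. Using $\lambda-\bar\lambda=2\I\,\IM\lambda$ and $(f,g)-(g,f)=2\I\,\IM(f,g)=-2\I\,\IM(g,f)$, this collapses to $4(\IM\lambda)\,\IM(Tf,f)$. Reinserting $f=(T-\bar\lambda I)^{-1}\psi$ yields the first identity; note that the right-hand side is nonnegative, which is consistent with $V_\lambda$ being a contraction and $T$ being dissipative, and in fact confirms that the inner product is taken linear in the first argument.

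For the second identity I would argue in mirror-image fashion, starting from the formula $V_\lambda^*=(T^*-\bar\lambda I)(T^*-\lambda I)^{-1}$ recorded above. Because $-T^*$ is maximal dissipative, $\dC_+\subset\rho(T^*)$, so $(T^*-\lambda I)^{-1}\in\bB(\cH)$ and any $\f\in\cH$ equals $(T^*-\lambda I)h$ with $h:=(T^*-\lambda I)^{-1}\f\in\dom T^*$; then $V_\lambda^*\f=(T^*-\bar\lambda I)h$ and $\|\cD_{V_\lambda^*}\f\|^2=\|(T^*-\lambda I)h\|^2-\|(T^*-\bar\lambda I)h\|^2$. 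The identical expansion, now with the roles of $\lambda$ and $\bar\lambda$ interchanged, produces the opposite overall sign and gives $-4(\IM\lambda)\,\IM(T^*h,h)$, i.e. the claimed equality after substituting $h=(T^*-\lambda I)^{-1}\f$.

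The computation itself is routine bookkeeping with the sesquilinear form; the only points that genuinely require the hypothesis are the two structural facts drawn from maximal dissipativity, namely $\dC_-\subset\rho(T)$ and $\dC_+\subset\rho(T^*)$. These guarantee that $f$ and $h$ are well defined and bounded for \emph{all} $\psi,\f\in\cH$, so that the substitutions into \eqref{rtkbnh} and into the adjoint formula are legitimate on the whole space. I expect no real obstacle beyond tracking the signs carefully and respecting the inner-product convention.
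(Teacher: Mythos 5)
Your proof is correct. The paper states Proposition \ref{vjtyjdp} without proof, so there is nothing to compare against; your argument --- reducing $\|\cD_{V_\lambda}\psi\|^2=\|\psi\|^2-\|V_\lambda\psi\|^2$ to the difference $\|(T-\bar\lambda I)f\|^2-\|(T-\lambda I)f\|^2$ via the Cayley relation and expanding the cross terms to get $4(\IM\lambda)\,\IM(Tf,f)$, then mirroring this for $V_\lambda^*=(T^*-\bar\lambda I)(T^*-\lambda I)^{-1}$ --- is exactly the standard computation the authors evidently had in mind, and your appeal to $\dC_-\subset\rho(T)$ and $\dC_+\subset\rho(T^*)$ correctly justifies that the substitutions cover all of $\cH$.
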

\begin{proposition} \label{reb12aa}
If $T$ is a closed densely defined dissipative operator, then:
\begin{enumerate}
\item[(1)] $\dom T\cap\sN_\lambda=\{0\}\;\; \forall \lambda\in\dC_+;$
\item[(2)]  the operator
\begin{equation}\label{reb12ab}
\left\{\begin{array}{l}
\dom \wt T_\lambda=\dom T\dot+\sN_\lambda\\
\wt T_\lambda(f_T+\f_\lambda)=Tf_T+\lambda\f_\lambda,\; f_T\in\dom T,\f_\lambda\in\sN_\lambda
\end{array}\right.
\end{equation}
is a maximal dissipative extension of $T$ for each $\lambda\in\dC_+$ and
\begin{equation}\label{ber13aa}
(\wt T_\lambda-\lambda I)(\wt T_\lambda-\bar\lambda I)^{-1}=V_\lambda P_{\sM_{\bar\lambda}}.
\end{equation}
\end{enumerate}
\end{proposition}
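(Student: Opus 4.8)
The plan is to prove the two assertions in turn, the common engine being that density of $\dom T$ identifies $\sN_\lambda$ with $\ker(T^*-\lambda I)$, as recalled just before the proposition; thus every $\f_\lambda\in\sN_\lambda$ lies in $\dom T^*$ and satisfies $T^*\f_\lambda=\lambda\f_\lambda$. This eigenrelation is what makes all the cross terms below collapse.

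For (1), I would take $g\in\dom T\cap\sN_\lambda$ and use that $g$ lies simultaneously in $\dom T$ and in $\dom T^*$ with $T^*g=\lambda g$. The adjoint relation then gives $(Tg,g)=(g,T^*g)=\bar\lambda\|g\|^2$, whence $\IM(Tg,g)=-(\IM\lambda)\|g\|^2$. Since $\IM\lambda>0$ while dissipativity of $T$ forces $\IM(Tg,g)\ge 0$, this yields $\|g\|=0$. Hence $\dom T\cap\sN_\lambda=\{0\}$, the sum $\dom\wt T_\lambda=\dom T\dot+\sN_\lambda$ is genuinely direct, and $\wt T_\lambda$ in \eqref{reb12ab} is well defined.

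For the dissipativity of $\wt T_\lambda$, I would write $u=f_T+\f_\lambda$ and expand $(\wt T_\lambda u,u)=(Tf_T+\lambda\f_\lambda,\,f_T+\f_\lambda)$. Rewriting $(Tf_T,\f_\lambda)=(f_T,T^*\f_\lambda)=\bar\lambda(f_T,\f_\lambda)$, the two mixed terms add up to $\bar\lambda(f_T,\f_\lambda)+\lambda\overline{(f_T,\f_\lambda)}=2\RE\big(\bar\lambda(f_T,\f_\lambda)\big)$, which is real. Therefore $\IM(\wt T_\lambda u,u)=\IM(Tf_T,f_T)+(\IM\lambda)\|\f_\lambda\|^2\ge 0$, using dissipativity of $T$ and $\IM\lambda>0$; so $\wt T_\lambda$ is dissipative.

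The decisive step is the computation of the Cayley transform, from which maximality and \eqref{ber13aa} follow together. For $u=f_T+\f_\lambda$ I would compute
\[
(\wt T_\lambda-\bar\lambda I)u=(T-\bar\lambda I)f_T+(\lambda-\bar\lambda)\f_\lambda,\qquad (\wt T_\lambda-\lambda I)u=(T-\lambda I)f_T .
\]
Here $(T-\bar\lambda I)f_T\in\sM_{\bar\lambda}$ while $\f_\lambda\in\sN_\lambda=\cH\ominus\sM_{\bar\lambda}$, and since $T$ is closed $\sM_{\bar\lambda}$ is closed, so the two summands are orthogonal and $\lambda-\bar\lambda\neq 0$. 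Consequently $\ran(\wt T_\lambda-\bar\lambda I)=\sM_{\bar\lambda}\oplus\sN_\lambda=\cH$, so the Cayley transform of the dissipative operator $\wt T_\lambda$ is defined on all of $\cH$, which by the characterization recalled above makes $\wt T_\lambda$ maximal dissipative. Finally, writing $\psi=(\wt T_\lambda-\bar\lambda I)u$, the orthogonal splitting gives $P_{\sM_{\bar\lambda}}\psi=(T-\bar\lambda I)f_T$; since by definition $V_\lambda$ maps $(T-\bar\lambda I)f_T$ to $(T-\lambda I)f_T$, we get $V_\lambda P_{\sM_{\bar\lambda}}\psi=(T-\lambda I)f_T=(\wt T_\lambda-\lambda I)u$, i.e. \eqref{ber13aa}. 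I expect the only delicate points to be purely bookkeeping, namely that the mixed terms are real (which hinges on $T^*\f_\lambda=\lambda\f_\lambda$) and the correct reading of $P_{\sM_{\bar\lambda}}$ off the orthogonal decomposition (which hinges on $\sM_{\bar\lambda}$ being closed); the conceptual content is entirely carried by the identity $\sN_\lambda=\ker(T^*-\lambda I)$.
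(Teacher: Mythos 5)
Your proof is correct. Parts of it coincide with the paper's argument: the dissipativity computation (expanding $(\wt T_\lambda u,u)$ and using $(Tf_T,\f_\lambda)=(f_T,T^*\f_\lambda)=\bar\lambda(f_T,\f_\lambda)$ so that the cross terms are real), the surjectivity of $\wt T_\lambda-\bar\lambda I$ via the orthogonal splitting $(T-\bar\lambda I)f_T+(\lambda-\bar\lambda)\f_\lambda$, and the resulting maximality are exactly what the paper does; for \eqref{ber13aa} the paper merely says it ``follows from \eqref{reb12ab} and \eqref{rtkbnh}'', and you supply the short verification ($P_{\sM_{\bar\lambda}}\psi=(T-\bar\lambda I)f_T$, hence $V_\lambda P_{\sM_{\bar\lambda}}\psi=(T-\lambda I)f_T=(\wt T_\lambda-\lambda I)u$), which is the intended one. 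Where you genuinely diverge is part (1): the paper works on the Cayley-transform side, showing $\ker(I-P_{\sM_{\bar\lambda}}V_\lambda)=\{0\}$ from the facts that $V_\lambda$ is a contraction with $\ker(I-V_\lambda)=\{0\}$, and then deducing $\dom T\cap\sN_\lambda=\{0\}$ from $\ran(I-V_\lambda)=\dom T$; you instead use the identification $\sN_\lambda=\ker(T^*-\lambda I)$ (valid since $\dom T$ is dense) and the one-line eigenvalue computation $\IM(Tg,g)=-(\IM\lambda)\|g\|^2$. Your route is more elementary and stays entirely on the operator side, at the price of invoking density of $\dom T$; the paper's route is purely contraction-theoretic and would survive without the density hypothesis, which is presumably why it is phrased that way. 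Both are complete proofs of the statement as given.
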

\begin{proof}
Let $V_\lambda$ be the Cayley transform given by \eqref{rtkbnh}. Then $\dom V_\lambda=\sM_{\bar\lambda}$, $(\dom V_\lambda)^\perp=\sN_\lambda$.

We show that $\ker(I-P_{\sM_{\bar\lambda}}V_\lambda)=\{0\}$.
Suppose that $P_{\sM_{\bar\lambda}}V_\lambda h=h$. Since $V_\lambda$ is a contraction,  $P_{\sN_\lambda}V_\lambda h=0$, i.e., $V_\lambda h=h$. Since
$\ker (I-V_\lambda)=\{0\}$, we get $h=0$.

If $(I-V_\lambda)h\in\sN_\lambda$, then $ (I-P_{\sM_{\bar\lambda}}V_\lambda) h=0$. Hence $h=0$. Because $\ran (I-V_\lambda)=\dom T$, we obtain
$\dom T\cap\sN_\lambda=\{0\}.$

Let  $\wt T_\lambda$ be the operator defined in \eqref{reb12ab}. Then
\[
\begin{array}{l}
(\wt T_\lambda(f_T+\f_\lambda),f_T+\f_\lambda)=(Tf_T+\lambda\f_\lambda,f_T+\f_\lambda)=( T f_T,f_T)+\lambda(\f_\lambda,f_T)\\[2mm]
+(f_T,\lambda\f_\lambda)+\lambda||\f_\lambda||^2.
\end{array}
\]
Hence
\[
\IM(\wt T_\lambda(f_T+\f_\lambda),f_T+\f_\lambda)=\IM (Tf_T,f_T)+\IM\lambda\,||\f_\lambda||^2\ge 0,\;\;f_T\in\dom T,\f_\lambda\in\sN_\lambda.
\]
Thus we have shown that the operator $\wt T_\lambda$ is dissipative. Moreover, the equality
\[
(\wt T_\lambda-\bar\lambda I)(f_T+\f_\lambda)=(T-\bar\lambda I)f_T+(\lambda-\bar\lambda)\f_\lambda
\]
implies that $\ran (\wt T_\lambda-\bar\lambda I)=\sM_{\bar\lambda}\oplus\sN_\lambda=\cH$.
This means that $\wt T_\lambda$ is  maximal dissipative.

The equality \eqref{ber13aa} follows from \eqref{reb12ab} and \eqref{rtkbnh}.
\end{proof}

\begin{corollary}\label{reb13ba}
Let $T$ be a closed densely defined dissipative operator. Then the domain $\dom T^*$ for each $\lambda\in\dC_+$ admits the direct decomposition
\begin{equation}\label{reb14a}
\dom T^*=\dom \wt T^*_\lambda\dot+\sN_\lambda,
\end{equation}
where $\wt T^*_\lambda$ is the adjoint of the maximal dissipative extension $\wt T_\lambda$, defined in \eqref{reb12ab}.
\end{corollary}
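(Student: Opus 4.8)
The plan is to exploit two features of the maximal dissipative extension $\wt T_\lambda$: its adjoint $\wt T^*_\lambda$ is a restriction of $T^*$, while $\wt T^*_\lambda-\lambda I$ is boundedly invertible on all of $\cH$ for $\lambda\in\dC_+$. The decomposition \eqref{reb14a} then follows by solving a single resolvent equation and subtracting.

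First I would record the relevant inclusions. The domain $\dom\wt T_\lambda=\dom T\dot+\sN_\lambda$ contains $\dom T$ and is therefore dense, and $\wt T_\lambda\supseteq T$; passing to adjoints reverses this inclusion, so $\wt T^*_\lambda\subseteq T^*$ and in particular $\dom\wt T^*_\lambda\subseteq\dom T^*$. Since $T$ is densely defined, $\sN_\lambda=\ker(T^*-\lambda I)\subseteq\dom T^*$ as well. Thus both summands on the right-hand side of \eqref{reb14a} lie inside $\dom T^*$, and it remains only to show that the sum is direct and exhausts $\dom T^*$.

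The analytic input is that $\wt T_\lambda$ is maximal dissipative, hence $-\wt T^*_\lambda$ is maximal dissipative too; therefore the resolvent set of $-\wt T^*_\lambda$ contains $\dC_-$, equivalently the resolvent set of $\wt T^*_\lambda$ contains $\dC_+\ni\lambda$. Consequently $\wt T^*_\lambda-\lambda I\colon\dom\wt T^*_\lambda\to\cH$ is a bijection. Directness is then immediate: if $u\in\dom\wt T^*_\lambda\cap\sN_\lambda$ then $(T^*-\lambda I)u=0$, and since $\wt T^*_\lambda\subseteq T^*$ this means $(\wt T^*_\lambda-\lambda I)u=0$, so $u=0$ by injectivity. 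For exhaustion, given $f\in\dom T^*$ I would use the bijectivity to pick the unique $u\in\dom\wt T^*_\lambda$ with $(\wt T^*_\lambda-\lambda I)u=(T^*-\lambda I)f$; the inclusion $\wt T^*_\lambda\subseteq T^*$ turns the left-hand side into $(T^*-\lambda I)u$, so $(T^*-\lambda I)(f-u)=0$, that is $f-u\in\ker(T^*-\lambda I)=\sN_\lambda$. Setting $\varphi:=f-u$ yields $f=u+\varphi$ with $u\in\dom\wt T^*_\lambda$ and $\varphi\in\sN_\lambda$, which is precisely \eqref{reb14a}.

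I expect no essential obstacle; the one point needing care is the half-plane bookkeeping, namely deducing $\lambda\in\dC_+\subseteq\rho(\wt T^*_\lambda)$ from the maximal dissipativity of $-\wt T^*_\lambda$, so that $\wt T^*_\lambda-\lambda I$ is invertible. The remainder is the standard ``solve a resolvent equation and subtract'' argument, with the identity $\sN_\lambda=\ker(T^*-\lambda I)$ (valid because $\dom T$ is dense) used in both the directness and the exhaustion steps.
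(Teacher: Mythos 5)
Your proposal is correct and follows essentially the same route as the paper: both use that $\wt T^*_\lambda\subseteq T^*$ and that $\ran(\wt T^*_\lambda-\lambda I)=\cH$ (from maximal dissipativity of $\wt T_\lambda$) to solve $(\wt T^*_\lambda-\lambda I)g=(T^*-\lambda I)f$ and conclude $f-g\in\ker(T^*-\lambda I)=\sN_\lambda$. Your explicit verification of directness via injectivity of $\wt T^*_\lambda-\lambda I$ is a detail the paper leaves implicit, but it is the natural completion of the same argument.
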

\begin{proof}
Since $\ran(\wt T^*-\lambda I)=\cH$, for each $f\in\dom T^*$ there exists a vector $g\in\dom \wt T^*$ such that  $(T^*-\lambda I)f=(\wt T^*-\lambda I)g$. So,
$f=g+\f_\lambda$, where $\f_\lambda\in\sN_\lambda=\ker (T^*-\lambda I)$, and \eqref{reb14a} is  satisfied.
\end{proof}

\subsection{C-symmetric dissipative operators and the uniqueness of $C$-self-adjoint maximal dissipative extensions}

Let $T$ be a closed densely defined $C$-symmetric dissipative operator, i.e., $\IM (Tf,f)\ge 0$ for all $f\in\dom T$ and $T\subseteq CT^*C$.

 According to \cite[Theorem 1]{Glazman1}, \cite[Theorem 1.1]{ELZ_1983}, a closed $C$-symmetric operator $T$ is maximal dissipative if and only if $T$ is
 C-self-adjoint and dissipative.

Define an operator $S$ by
\[
\dom S=C\dom T,\;\;S:=CTC.
\]
Then $S$ is a closed densely defined and anti-dissipative (accumulative) $C$-symmetric operator. Moreover, $(Tf,g)=(f,Sg)$ for all $f\in\dom T$ and all $g\in\dom S.$ Thus $\{T,S\}$ is an adjoint pair. Moreover, $\wt T$ is a $C$-self-adjoint maximal dissipative extension of $T$ if and only if $\wt S:=\wt T^*$ is a $C$-self-adjoint  maximal anti-dissipative extension of $S$.

From \eqref{rtkbnh} it follows that if $T$ is $C$-symmetric ($C$-self-adjoint), then its Cayley transform $V_\lambda$ is $C$-symmetric ($C$-self-adjoint) for
each $\lambda\in\dC_+$.
By Theorem \ref{abe17a}, the Cayley transform $V_\lambda$ admits a $C$-self-adjoint contractive extension $\wt V$. Then $\wt T=(\lambda I-\bar\lambda \wt V)(I-\wt V)^{-1}$ is a $C$-self-adjoint maximal dissipative extension of $T$.
This proves that
 Glazman's Theorem \ref{th1} holds.

Our next aim it to supplement this result by some uniqueness criteria.
Let $\lambda\in\dC_+$ and let $\wt T_\lambda$, see \eqref{reb12ab}, be a maximal dissipative extension of the operator $T$.
If $V_\lambda$ is the Cayley transform of $T$ given by \eqref{rtkbnh}, then $\dom V_\lambda=\sM_{\bar\lambda}$, $V_\lambda$ is a contraction and the Cayley
transform of $\wt T_\lambda$ is the operator $\wt V_\lambda:=V_\lambda P_{\sM_{\bar\lambda}}$, see Proposition \ref{reb12aa} and the equality
\eqref{ber13aa}.

Let us define the following linear space:
\begin{equation}\label{reb22aa}
\cG_\lambda:=(\wt T^*_\lambda-\lambda I)^{-1}C\sM_{\bar\lambda}.
\end{equation}
\begin{theorem}\label{ber22aa}
Suppose that $T$ is a closed $C$-symmetric dissipative operator with dense domain.
 Then the following are equivalent:
 \begin{enumerate}
 \def\labelenumi{\rm (\roman{enumi})}
\item The operator $T$ admits a unique $C$-self-adjoint maximal dissipative extension.
\item The operator $T$ admits a unique maximal dissipative extension $\wt T_0$ such that $\wt T_0^*$ is a maximal anti-dissipative extension of $S:=CTC.$
\item For at least  one $\lambda\in\dC_+$ (and then for all such $\lambda$) the equality
\[
\ran \cD_{V^*_\lambda}\cap C\sN_\lambda=\{0\}
\]
holds.
\item For at least one $\lambda\in\dC_+$ (and then for all such $\lambda$) the equality
\[
\inf\limits_{h\in \sM_{\bar\lambda}}
||\cD_{V^*_\lambda}C \f_\lambda-\cD_{V^*_\lambda} Ch||^2=0
\]
is satisfied for each $\f_\lambda\in \sN_\lambda\setminus\{0\}$.
\item For at least  one $\lambda\in\dC_+$ (and then for all such $\lambda$) the equality
\[
\sup\limits_{f\in \cH}\cfrac{|(Cf,\f_\lambda)|^2}{||\cD_{V^*_\lambda} f||^2}=+\infty
\]
holds for each $\f_\lambda\in\sN_{\lambda}\setminus\{0\}$.
\item For at least  one $\lambda\in\dC_+$ (and then for all such $\lambda$) the equality
\[
\inf\limits_{\psi\in\cG_\lambda}\IM\left(-\wt T^*_\lambda(f-\psi),f-\psi\right)=0\;\;\forall f\in\dom\wt T^*_\lambda\setminus\{0\}
\]
is valid, where $\cG_\lambda$ is defined in \eqref{reb22aa}.
\item For at least one $\lambda\in\dC_+$ (and then for all such $\lambda$) the equality
\[
\sup\limits_{g\in\dom \wt T^*_\lambda} \cfrac{|(C(\wt T^*_\lambda-\bar\lambda I)g,\f_\lambda)|^2}{-\IM (\wt T^*_\lambda g,g)}=+\infty
\]
holds for each $\f_\lambda\in\sN_{\lambda}\setminus\{0\}$.
\end{enumerate}
\end{theorem}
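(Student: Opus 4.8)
The plan is to reduce everything to the Cayley-transform picture and then apply the uniqueness machinery already developed for contractions in Theorem~\ref{kkvit23ab}, using the dictionary $\sN_\lambda = \sM_{\bar\lambda}^\perp = (\dom V_\lambda)^\perp$. Fix $\lambda\in\dC_+$ and write $V:=V_\lambda$, so that $V$ is a $C$-symmetric contraction with $\cH_V=\sM_{\bar\lambda}$ and $\cH_V^\perp=\sN_\lambda$. The inverse Cayley transform $\wt T=(\lambda I-\bar\lambda\wt V)(I-\wt V)^{-1}$ sets up a bijection between $C$-self-adjoint contractive extensions $\wt V$ of $V$ and $C$-self-adjoint maximal dissipative extensions $\wt T$ of $T$, and similarly identifies contractive extensions of the adjoint pair $\{V,CVC\}$ with maximal dissipative extensions $\wt T$ whose adjoint $\wt T^*$ is maximal anti-dissipative extending $S=CTC$. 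First I would record this correspondence carefully, so that (i) and (ii) in the theorem become, respectively, the statement that $V$ has a unique $C$-self-adjoint contractive extension and the statement that $\{V,CVC\}$ has a unique contractive extension.

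With that translation in hand, the equivalence of (i), (ii) and (iii) is essentially immediate: condition (iii) is exactly \eqref{abe23b} for $V=V_\lambda$ (recall $\cH_V^\perp=\sN_\lambda$), and Theorem~\ref{kkvit23ab} already proves that \eqref{abe23b} is equivalent to the simultaneous uniqueness of the $C$-self-adjoint contractive extension and of the contractive extension of the adjoint pair. For the ``one $\lambda$ implies all $\lambda$'' clause I would invoke the fact that the family $\{\wt T_\lambda\}$ and the subspaces $\sN_\lambda$ vary analytically and that the condition $\ran\cD_{V_\lambda^*}\cap C\sN_\lambda=\{0\}$ is independent of the choice of $\lambda\in\dC_+$; this follows because uniqueness of the maximal dissipative $C$-self-adjoint extension is a property of $T$ itself, not of the auxiliary parameter $\lambda$ used to form the Cayley transform.

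The remaining equivalences (iv)--(vii) are the reformulations of $\ran\cD_{V_\lambda^*}\cap C\sN_\lambda=\{0\}$ through the list of equivalent conditions in Remark~\ref{hvf24a}, applied to the nonnegative operator $B=\cD_{V_\lambda^*}^2=I-V_\lambda V_\lambda^*$ and the subspace $\sN=C\sN_\lambda$. Indeed (iv) is condition (ii) of that remark (with $h=C\f_\lambda$, using that $h\in C\sN_\lambda\setminus\{0\}$ ranges over $C\sN_\lambda\setminus\{0\}$), and (v) is condition (iv) of the remark after writing $(Bf,f)=\|\cD_{V_\lambda^*}f\|^2$ and $|(f,C\f_\lambda)|=|(Cf,\f_\lambda)|$ via \eqref{jcyjdf}. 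To obtain (vi) and (vii) I would transport these two statements back through the Cayley transform using the second identity of Proposition~\ref{vjtyjdp}, namely $\|\cD_{V_\lambda^*}\f\|^2=-4(\IM\lambda)\,\IM(T^*(T^*-\lambda I)^{-1}\f,(T^*-\lambda I)^{-1}\f)$, together with the substitution $g=(\wt T^*_\lambda-\lambda I)^{-1}\f$, which is exactly what makes $\cG_\lambda=(\wt T^*_\lambda-\lambda I)^{-1}C\sM_{\bar\lambda}$ in \eqref{reb22aa} the image of the infimum subspace $C\sM_{\bar\lambda}=C\cH_V$; the ``$g$-variable'' supremum in (vii) is likewise the image of the ``$f$-variable'' supremum in (v).

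The main obstacle I anticipate is the bookkeeping in this last transport step: one must track precisely how the Cayley parametrization sends $\cD_{V_\lambda^*}f$ and the resolvent $(\wt T^*_\lambda-\lambda I)^{-1}$ to the dissipative-side quantities, verify that the denominators $-\IM(\wt T^*_\lambda g,g)$ and $\IM(-\wt T^*_\lambda(f-\psi),f-\psi)$ are the correct images of $\|\cD_{V_\lambda^*}f\|^2$ under Proposition~\ref{vjtyjdp}, and confirm that $C$ intertwines the relevant subspaces so that $\sN=C\sN_\lambda$ corresponds to $\sN_\lambda$ on the operator side via \eqref{hvf19a}. Once these identifications are pinned down, conditions (iv)--(vii) are simply Remark~\ref{hvf24a}(ii),(iv) read off on the two sides of the Cayley transform, and the theorem follows.
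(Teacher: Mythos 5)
Your proposal follows essentially the same route as the paper: reduce (i)--(iii) to the contraction statement \eqref{abe23b} via the Cayley transform and Theorem~\ref{kkvit23ab} (with the adjoint-pair $\{V_\lambda, CV_\lambda C\}$ identification handling (ii)), then obtain (iv)--(vii) from Remark~\ref{hvf24a} applied to $B=\cD_{V_\lambda^*}^2$ and $\sN=C\sN_\lambda$, transported to the dissipative side by the identity of Proposition~\ref{vjtyjdp} and the substitution $g=(\wt T_\lambda^*-\lambda I)^{-1}f$, which is exactly what produces $\cG_\lambda$. Your observation that the ``one $\lambda$ implies all $\lambda$'' clause follows because condition (iii) is, for each fixed $\lambda$, equivalent to the $\lambda$-independent statement (i) is also the (implicit) argument in the paper; the appeal to analytic dependence is unnecessary.
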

\begin{proof}
Set $\cH_{V_\lambda}:=\sM_{\bar\lambda}$. Then $\cH_{V_\lambda}^\perp=\sN_\lambda$. The non-densely defined contraction $V_\lambda$ is  $C$-symmetric.
By Theorem \ref{abe17a} we have:\,
 \eqref{abe23b} $\Longleftrightarrow$ the operator $V:=V_\lambda$ admits a unique $C$-self-adjoint contractive extension $\wt V_0$ $\Longleftrightarrow$ $\wt V_0$ is the unique contractive extension of the adjoint pair of contractions $\{V_\lambda,CV_\lambda C\}$. But \eqref{abe23b}
is the same as  $\ran\cD_{V_\lambda^*}\cap C\sN_\lambda=\{0\}$.
  This proves the equivalences of (i), (ii), and (iii).

The equivalence of the other statements  can be shown by applying  Remark \ref{hvf24a} with  $B:=\cD^2_{V_\lambda^*}$ and $\sN:=C\sN_\lambda$. We carry out
this for the conditions (v) and (vi) in detail.

Note that
$\dom V_\lambda =\cH_{V_\lambda}=\sM_{\bar \lambda}$, $C\cH_{V_\lambda}=C\sM_{\bar\lambda}$.
By Proposition \ref{vjtyjdp},
\[
||\cD_{V^*_\lambda}\f||^2
=-4\IM \lambda\,\IM (\wt T^*_\lambda(\wt T^*_\lambda-\lambda I)^{-1}\f,(\wt T^*_\lambda-\lambda I)^{-1}\f),\; \f\in\cH.
\]
It follows that for $Ch$, $h\in \sM_{\bar\lambda}$, we have
\[
||\cD_{V^*_\lambda}Ch||^2
=-4\IM \lambda\,\IM (\wt T^*_\lambda(\wt T^*_\lambda-\lambda I)^{-1}Ch,(\wt T^*-\lambda I)^{-1}Ch).
\]
Hence
\[
\begin{array}{l}
 ||\cD_{V^*_\lambda}g-\cD_{V^*_\lambda}Ch||^2\\
 = -4\IM \lambda\,\IM \left(\wt T^*_\lambda((\wt T^*_\lambda-\lambda I)^{-1}g-(\wt T^*_\lambda-\lambda I)^{-1}Ch),(\wt T^*_\lambda-\lambda I)^{-1}g-(\wt
 T^*-\lambda I)^{-1}Ch\right).
\end{array}
\]
Therefore,
\[\begin{array}{l}
\inf\limits_{h\in\sM_{\bar\lambda}}||\cD_{V^*_\lambda}g-\cD_{V^*_\lambda}Ch||^2=0\Longleftrightarrow\\[2mm]
\inf\limits_{h\in\sM_{\bar\lambda}}\Bigl(-\IM \left(\wt T^*_\lambda((\wt T^*_\lambda-\lambda I)^{-1}g-(\wt T^*_\lambda-\lambda I)^{-1}Ch),(\wt
T^*_\lambda-\lambda I)^{-1}g-(\wt T^*-\lambda I)^{-1}Ch\right)\Bigr)=0\\[2mm]
\Longleftrightarrow \inf\limits_{h\in\sM_{\bar\lambda}}\Bigl(-\IM \left(\wt T^*_\lambda((\wt T^*_\lambda-\lambda I)^{-1}g-(\wt T^*_\lambda-\lambda
I)^{-1}Ch),(\wt T^*_\lambda-\lambda I)^{-1}g-(\wt T^*-\lambda I)^{-1}Ch\right)\Bigr)=0\\[2mm]
\Longleftrightarrow \inf\limits_{\psi\in\cG_\lambda}\Bigl(-\IM \left(\wt T^*_\lambda(f-\psi),f-\psi\right)\Bigr)=0,\;\; f=(\wt T^*_\lambda-\lambda I)^{-1}g.
\end{array}
\]
Since (see Remark \ref{hvf24a})
\[
\inf\limits_{h\in\sM_{\bar\lambda}}||\cD_{V^*_\lambda}g-\cD_{V^*_\lambda}Ch||^2=0\;\;\forall g\in\cH\Longleftrightarrow
\ran \cD_{V^*_\lambda}\cap C\sN_\lambda=\{0\},
\]
we get
\[
\inf\limits_{\psi\in\cG_\lambda}\IM\left(-\wt T^*_\lambda(f-\psi),f-\psi\right)=0\;\;\forall f\in\dom\wt T^*_\lambda\setminus\{0\}.
\]
Thus (i) $\Longleftrightarrow$ (ii) $\Longleftrightarrow$ (v).

Further,
\[
\begin{array}{l}
\cfrac{|(Cf,h)|^2}{||\cD_{V^*_\lambda} f||^2}=-\cfrac{|(Cf,h)|^2}{4\IM \lambda\,\IM (\wt T^*_\lambda(\wt T^*_\lambda-\lambda I)^{-1}f,(\wt
T^*_\lambda-\lambda I)^{-1}f)}\\[2mm]
\quad=-\cfrac{|(C(T^*_\lambda-\lambda I)g,h)|^2}{4\IM \lambda\,\IM (\wt T^*_\lambda g,g)},\; g=(T^*_\lambda-\lambda I)^{-1}f.
\end{array}
\]
Since (v) $\Longleftrightarrow$ (iii), we conclude that  (vii) $\Longleftrightarrow$ (iii).
\end{proof}

Let $V:=(T-iI)(T+iI)^{-1}$ be the Cayley transform of a $C$-symmetric closed densely defined operator $T$.
Then $\cH_V:=\dom V=\sM_{-i}$, $\cH_V^\perp=\sN_i$ and the operator $X_0$ defined by \eqref{abe17b} takes the form
\[
X_0:\cD_{V^*}Ch\mapsto P_{\sN_i}CVh,\; h\in\sM_{-i}.
\]
Let $X^*_0:\sN_{i}\mapsto \overline{\cD_{V^*}C\sM_{-i}}$ be its adjoint operator.
We define, see \eqref{abe18a},
\[
\wt V_{X^*_0}=VP_{\sM_{-i}}+\cD_{V^*}X^*_0P_{\sN_i}.
\]
Then, using Theorem \ref{abe23b}, we describe the unique $C$-self-adjoint maximal dissipative extension in the following theorem.
\begin{proposition}\label{kvit30bab} Supose that $T$ is a $C$-symmetric closed densely defined operator $T$ such  that one of the conditions in Theorem
\ref{ber22aa} is fulfilled. Let $V:=(T-iI)(T+iI)^{-1}$ be the Cayley transform of $T$. Then
the unique $C$-self-adjoint maximal dissipative extension of $T$,
 that is, the  unique maximal dissipative extension of the dual pair $\{T, CTC\}$,
 is the inverse Cayley transform of the operator $\wt V_{X^*_0}$, i.e.,
\[
\wt T_0=i(I+\wt V_{X^*_0})(I-\wt V_{X^*_0})^{-1}.
\]
\end{proposition}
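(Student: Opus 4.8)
The plan is to specialize the Cayley-transform dictionary to $\lambda=i$ and then simply assemble Theorems \ref{ber22aa} and \ref{kkvit23ab}. First I would record that, with $\lambda=i$, one has $\cH_V=\dom V=\sM_{-i}$ and $\cH_V^\perp=\sN_i$, so that condition (iii) of Theorem \ref{ber22aa}, namely $\ran\cD_{V^*}\cap C\sN_i=\{0\}$, is precisely condition \eqref{abe23b} for the $C$-symmetric contraction $V$. Since by hypothesis one of the equivalent conditions of Theorem \ref{ber22aa} is fulfilled, all of them hold; in particular, $T$ has a unique $C$-self-adjoint maximal dissipative extension, which by the equivalence (i)$\,\Longleftrightarrow\,$(ii) coincides with the unique maximal dissipative extension of the dual pair $\{T,CTC\}$.

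Next I would invoke Theorem \ref{kkvit23ab} for the contraction $V$: under \eqref{abe23b} its statements (a) and (d) give that $\wt W_0=\wt V_{X^*_0}$ is the \emph{unique} $C$-self-adjoint contractive extension of $V$ and, moreover, the unique contractive extension of the adjoint pair $\{V,CVC\}$. Here $\wt V_{X^*_0}=VP_{\sM_{-i}}+\cD_{V^*}X_0^*P_{\sN_i}$ is exactly the operator written just before the proposition.

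It then remains to carry this uniqueness across the inverse Cayley transform. Because $V=V_i$ is the Cayley transform of $T$ at $\lambda=i$ and $\wt V_{X^*_0}$ is a $C$-self-adjoint contractive extension of $V$, the correspondence stated just before Theorem \ref{ber22aa} (specialized to $\lambda=i$, where $\lambda I-\bar\lambda\wt V=i(I+\wt V)$) shows that $\wt T_0=i(I+\wt V_{X^*_0})(I-\wt V_{X^*_0})^{-1}$ is a $C$-self-adjoint maximal dissipative extension of $T$. Conversely, the Cayley transform of any $C$-self-adjoint maximal dissipative extension of $T$ is a $C$-self-adjoint contractive extension of $V$, which by the preceding paragraph must coincide with $\wt V_{X^*_0}$; since the Cayley transform is a bijection, the extension itself must be $\wt T_0$, which yields uniqueness. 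Finally, as $\wt V_{X^*_0}$ is also the unique contractive extension of $\{V,CVC\}$ — the Cayley transform of the dual pair $\{T,CTC\}$ — the same passage identifies $\wt T_0$ with the unique maximal dissipative extension of $\{T,CTC\}$, which is the asserted ``that is'' clause.

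The only point needing a word of care is that the expression $i(I+\wt V_{X^*_0})(I-\wt V_{X^*_0})^{-1}$ genuinely defines a closed densely defined operator, i.e.\ that $\ker(I-\wt V_{X^*_0})=\{0\}$ and that $\ran(I-\wt V_{X^*_0})$ is dense. I expect this to be the only mildly technical step, but it is already subsumed in the general assertion quoted before Theorem \ref{ber22aa} that the inverse Cayley transform of \emph{any} $C$-self-adjoint contractive extension of $V_\lambda$ is a maximal dissipative — hence closed and densely defined — $C$-self-adjoint extension of $T$, so no separate argument is required and the proof reduces to the bookkeeping above.
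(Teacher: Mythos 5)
Your proof is correct and follows exactly the route the paper intends: the paper states this proposition without a separate proof, relying (as you do) on specializing Theorem \ref{ber22aa} to $\lambda=i$, invoking Theorem \ref{kkvit23ab} to identify $\wt V_{X_0^*}$ as the unique $C$-self-adjoint contractive extension of $V$ (and of the pair $\{V,CVC\}$), and transporting this back through the bijective Cayley correspondence established before Theorem \ref{ber22aa}. Your closing remark on $\ker(I-\wt V_{X_0^*})=\{0\}$ and the density of $\ran(I-\wt V_{X_0^*})$ is the right point of care and is indeed covered by the general Cayley-transform facts recorded in Section 4.
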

A special case of dissipative operators are symmetric operators. Note that each densely defined closed symmetric operator is either maximal symmetric or admits maximal symmetric extensions \cite{AG}, \cite{Schmudgen}.

Let $A$ be a symmetric operator. Then the Cayley transform $V_\lambda=(A-\lambda I)(A-\bar\lambda I)^{-1}$ ($\IM \lambda\ne 0$), is an isometric operator such that\,
$\dom V_\lambda=\sM_{\bar\lambda}$, $(\dom V_\lambda)^\perp=\sN_\lambda$, $\ran V_\lambda=\sM_\lambda$, and $\ker V^*_\lambda=\sN_{\bar\lambda}$.

The next theorem follows immediately  from Proposition \ref{lip07a}, Theorem \ref{kbg08ab} and Theorem \ref{ber22aa}.
\begin{theorem} \label{kbh10aa}
Let $A$ be a densely defined closed and non-maximal symmetric operator. Suppose, in addition, that $A$ is $C$-symmetric w.r.t. a conjugation $C$. Then $A$ admits self-adjoint and $C$-self-adjoint extensions. If the condition $\sN_{\bar\lambda}\cap C\sN_\lambda=\{0\}$ is fulfilled for some nonreal $\lambda$, then it is true for each nonreal $\lambda$ and  the adjoint pair $\{A, CAC\}$ admits a unique maximal dissipative extension. This unique extension is a self-adjoint and $C$-self-adjoint operator.
\end{theorem}
\smallskip

\subsection*{Acknowledgement}
 Yu.~Arlinski\u{\i} thanks  the Deutsche Forschungsgemeinschaft for support from the research grant SCHM1009/6-1.

\end{document}